\newtheorem{theorem}{Theorem}[section]
\newtheorem{definition}[theorem]{Definition}
\newtheorem{corollary}[theorem]{Corollary}
\newtheorem{assumption}[theorem]{Assumption}
\newtheorem{remark}[theorem]{Remark}
\newcommand{\R}{\mathbb{R}}
\newcommand{\Z}{\mathbb{Z}}
\newcommand{\eps}{\varepsilon}
\newcommand{\Mzero}{M_0}
\newcommand{\Meps}{M_\eps}
\newcommand{\PiS}{\Pi_S}
\newcommand{\PiR}{\Pi_R}
\newcommand{\PiN}{\Pi_N}
\newcommand{\PiNs}{\Pi_{N_s}}
\newcommand{\PiNu}{\Pi_{N_u}}
\newcommand{\PiM}{\Pi_M}
\newcommand{\Sone}{\mathbb{S}^1}
\newcommand{\pderiv}[2]{\frac{\partial #1}{\partial #2}}
\newcommand{\sx}{s_x}
\newcommand{\st}{s_t}
\newcommand{\cx}{c_x}
\newcommand{\ct}{c_t}
\newcommand{\tx}{t_x}
\newcommand{\cp}{c_{\phi}}
\newcommand{\sphi}{s_{\phi}}
\title{
\textbf{A coordinate-independent  Pontryagin-Rodygin theorem for slow-fast averaging}}
\author[1]{Bob Rink}
\author[2]{Theodore Vo}
\author[3]{Martin Wechselberger\thanks{Corresponding author: martin.wechselberger@sydney.edu.au}}
\affil[1]{\footnotesize Department of Mathematics, Vrije Universiteit, 1081 HV Amsterdam, the Netherlands}
\affil[2]{\footnotesize School of Mathematics, Monash University, Clayton, Victoria 3800, Australia}
\affil[3]{\footnotesize School of Mathematics and Statistics, University of Sydney, Sydney NSW, Australia}
\date{}
\begin{document}

\maketitle

\begin{abstract}
The slow drift along a manifold of periodic orbits is a key mathematical structure underlying bursting dynamics in many scientific applications. While classical averaging theory,  as formalised by the {\em Pontryagin-Rodygin theorem}, provides a leading-order approximation for this slow drift, the connection to the underlying geometry described by {\em Geometric Singular Perturbation Theory} (GSPT)--also known as {\em Fenichel theory}--is often not explicit, particularly at higher orders. This paper makes that connection rigorous and constructive using the parametrisation method. We provide a detailed, self-contained exposition of this functional analytic technique, showing how it synthesizes the geometric insight of invariant manifold theory with a systematic, perturbative algorithm. By treating the manifold's embedding and the reduced flow as coupled unknowns, the method provides a constructive proof of an averaged system that is guaranteed to be geometrically consistent with the persistence of the normally hyperbolic manifold to any order. We translate the abstract theory into a concrete computational procedure using Floquet theory, spectral analysis, and the Fredholm alternative, yielding a practical guide for computing high-accuracy, higher-order averaged models, and we demonstrate its implementation, both analytically and numerically, through specific examples.
\end{abstract}

\section{Introduction}

A central challenge in the study of complex systems is to understand how macroscopic, emergent dynamics arise from intricate underlying rules. In systems with multiple timescales, this behavior is governed by the flow on a lower-dimensional {\em slow manifold} embedded in the full state space. The identification of this manifold and the derivation of the reduced dynamics on it is a fundamental goal of model reduction, with a rich history of approaches.

One school of thought is rooted in {\em centre manifold theory} \cite{Carr1981}, which provides a rigorous framework for analysing bifurcations near, e.g., equilibria by systematically reducing the dynamics to the subspace spanned by eigenvectors with non-hyperbolic eigenvalues. A related but distinct concept is {\em invariant manifold theory} \cite{HPS1977} for singularly perturbed systems, as laid out in, e.g., Geometric Singular Perturbation Theory (GSPT) \cite{fenichel1979, jones1995}. Here, the manifold is not restricted to a single equilibrium but persists over a region of phase space, representing a family of equilibria for the limiting fast dynamics known as the {\em layer problem}. This framework is essential for studying general, {\em `non-standard'} singular perturbation problems \cite{wechselberger2020}. Parallel to these geometric theories, {\em algorithmic methods} such as Computational Singular Perturbation (CSP) were developed to numerically identify the slow and fast subspaces from simulation data, providing a powerful diagnostic tool, particularly in chemical kinetics \cite{lam1989}.

This paper focuses on the {\em parametrisation method}, a technique that synthesises the geometric insight of invariant manifold theory with a constructive, functional analytic framework \cite{haro2016, roberts2015}. Like the methods mentioned above, it provides a coordinate-independent description of the slow manifold.  The idea of parametrization has been applied before in  GSPT problems involving manifolds of equilibria \cite{lizarraga2021}, and it has several unique features that make it particularly powerful for the periodic orbit case considered here:
\begin{itemize}
    \item It is explicitly a {\em perturbative method}, constructing the manifold and its flow as a formal power series in a small parameter $\eps$. This provides a systematic path to achieving high-order accuracy.
    \item It is an {\em equation-based, functional approach}, directly solving for a function that defines the manifold's embedding. This contrasts with data-driven methods like CSP, which iteratively identify local tangent spaces.
    \item It rigorously treats the manifold's embedding and its reduced flow as {\em coupled unknowns}, solving for them simultaneously to ensure geometric consistency at all orders.
\end{itemize}

The main contribution of this manuscript is to apply this method to a key problem in GSPT: the slow drift along a manifold foliated by periodic orbits. This mathematical structure is the cornerstone of {\em bursting dynamics}, a ubiquitous phenomenon observed in many applications, in particular in neuroscience \cite{Rinzel1987,Izhikevich2000,Rubin2002}. The analysis of such systems has a rich history, rooted in classical averaging techniques \cite{Bogoliubov1961, sanders2007}.  The foundational result in this area is the Pontryagin-Rodygin theorem \cite{PontryaginRodygin1960}, which provides conditions for the existence of a first-order averaged system that approximates the slow drift. However, the classical theorem is presented in a coordinate-dependent fashion, and the connection between the formal averaging calculation and the underlying geometry described by GSPT is often not explicit, particularly when seeking higher-order accuracy.

 This manuscript makes this connection rigorous and, in doing so, provides a coordinate-independent and higher-order generalisation of the Pontryagin-Rodygin theorem. We show how the parametrisation method provides a constructive proof of an averaged system that is guaranteed to be geometrically consistent with the persistence of a {\em normally hyperbolic} manifold to any order in $\eps$. We thus fill a theoretical gap and establish a practical, systematic procedure for computing higher-order averaged models. We translate the abstract theory into a concrete algorithm using Floquet theory, spectral analysis, and the Fredholm alternative to separate the slow dynamics from the manifold's geometry.
 Establishing this rigorous, high-order framework for the normally hyperbolic case is a crucial first step towards a rigorous analytical framework for systems where normal hyperbolicity is lost,  which can lead to complex phenomena such as torus canards \cite{vo2017,roberts2015averaging}.



The paper is structured as follows. In Section \ref{sec:problem_setup}, we define the problem and its geometric properties. In Section \ref{sec:para}, we detail the iterative procedure of the parametrisation method. Section \ref{sec:results} presents our main theoretical results, formalising the existence of the averaged flow and relating our approach to classical averaging theory. We then demonstrate the theory by way of two examples. In Section~\ref{sec:ellipsoid}, we detail the method in an analytically tractable non-standard multiple timescale system.  Finally, in Section~\ref{sec:mlt}, we describe the numerical implementation of the method in a standard slow/fast bursting model from neuroscience. 

\section{The problem setup}\label{sec:problem_setup}
We study perturbation problems of the form
\begin{equation} \label{eq:intro_system}
    \frac{dz}{dt} = z' = F(z, \eps)=\sum_{i=0}^m \eps^i F_i(z), \quad z \in \R^n, \quad 0 < \eps \ll 1, \quad m\ge 1
\end{equation}
with $F(z,\eps)$ to be sufficiently smooth in both arguments.
We consider this system \eqref{eq:intro_system} under the following assumption.

\begin{assumption}\label{ass:unperturbed}
The unperturbed system,  
\begin{equation}\label{eq:layer_problem}
    z' = F(z, 0) = F_0(z)\,,
\end{equation}
called the layer problem, possesses a $k$-dimensional family of periodic orbits $\gamma_0(x,t)$, $1\le k<n-1$. This family forms an embedded $(k+1)$-dimensional manifold, $\Mzero$.
\end{assumption}
\noindent
We shall denote by $\tau(x)>0$ the period of the periodic orbit $t\mapsto \gamma_0(x,t)$, and by $\omega(x):=\frac{2\pi}{\tau(x)}$ its frequency. An embedding of the manifold $M_0$ is then given by
$$\Gamma_0: U \times \mathbb{S}^1 \to \R^n \ \mbox{defined by}\ 
\Gamma_0(x,\phi) := \gamma_0(x,\omega(x)^{-1}\phi) \, .
$$
Here $U \subset \R^k$ is an open set parametrising the periodic orbits in $M_0$, while $\mathbb{S}^1 = \R / (2\pi\Z)$ is the unit circle parametrising the phase $\phi = \omega(x) t$ along the orbits. 
\begin{assumption}
    The period map $x\mapsto \tau(x)$ is smooth and uniformly bounded from below and above. The same is then true for the frequency map $x\mapsto \omega(x)$.
\end{assumption}
By definition, the map $D\Gamma_0(x,\phi): \mathbb{R}^k\times \mathbb{R} \to \mathbb{R}^n$ has full rank $(k+1)$.
Moreover, in the parametrisation chart $U\times \mathbb{S}^1$, the vector field $F_0$ on $M_0$ is given by $r_0(x,\phi)=(0,\omega(x))^\top$, i.e.,
\begin{equation}
    \begin{aligned}
    x'&=0\,,\\
    \phi' &= \omega(x)\, .
\end{aligned}
\end{equation}
In other words, our embedding  $\Gamma_0$ satisfies the {\em conjugacy equation}
\begin{equation}\label{eq:conjugacy}
    D\Gamma_0 (x,\phi) r_0 (x,\phi) = 
\partial_\phi \Gamma_0 (x,\phi) \omega(x) =
F_0(\Gamma_0(x,\phi))\,.
\end{equation}

\noindent
A sketch of an embedding of a manifold of periodic orbits $M_0$ is shown in Figure~\ref{fig:embed-averaging}.
\begin{figure}[t]
    \centering
    \includegraphics[width=11.0cm]{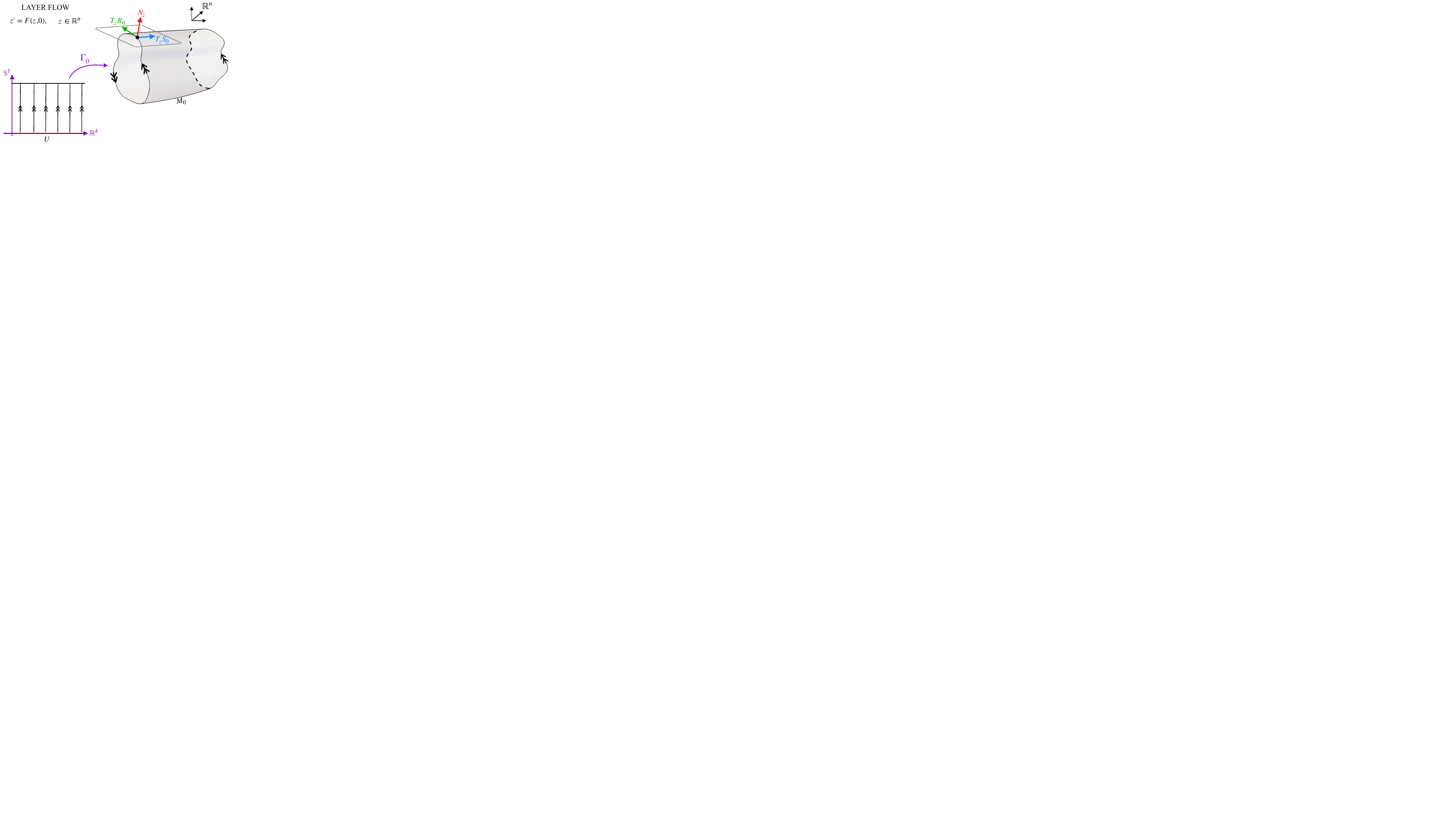}
    \caption{ A sketch of a normally hyperbolic manifold $M_{0}$ foliated by periodic orbits embedded by $\Gamma_0$. The tangent space $T_zM_0$ at a point $z\in M_0$ splits into the direction of the flow, $T_zR_0$, and the phase-preserving directions, $T_z S_0$. The dynamics normal to the manifold occur in the bundle $N_z$. 
    }
    \label{fig:embed-averaging}
\end{figure}

\subsection{Variational equation}

Given the limiting system \eqref{eq:layer_problem} and its family of periodic orbits $\gamma_0(x,t)$, we are interested in the behavior of perturbations around these orbits, i.e., we consider a perturbed solution $z(t) = \gamma_0(x,t) + \zeta(x,t)$, where $\zeta(x,t)$ is considered a small deviation, i.e. $||\zeta||\ll 1$. 
This leads to the leading order {\em variational equation}:
\begin{equation}\label{eq:var_eq}
    \frac{\partial}{\partial t}\zeta =D_t\zeta = DF_0(\gamma_0(x,t))\zeta=:A(x,t)\zeta\,.
\end{equation}
This is a linear, non-autonomous ordinary differential equation, where $A(x,t)$ is a $\tau(x)$-periodic matrix, i.e., $A(x,t)=A(x,t+\tau(x))$. 

\begin{definition}
    For fixed $x$, let $\Phi(x,t)$ be the fundamental matrix solution to this variational equation \eqref{eq:var_eq}, and let 
    $$\Phi(x,t,s):=\Phi(x,t)\Phi(x,s)^{-1}$$ 
    be its corresponding transition matrix, i.e., $\Phi(x,t_0,t_0)= \mathbb{I}_n$ and
    $$\frac{\partial}{\partial t}\Phi(x,t,t_0)=D_t \Phi(x,t,t_0)=A(x,t)\Phi(x,t,t_0)\,.$$
    Hence for given initial condition $\zeta(x,t_0)=\zeta_0(x)$, $\zeta(x,t)=\Phi(x,t,t_0)\zeta_0(x)$ is a solution of \eqref{eq:var_eq}.
\end{definition} 

\begin{remark}
    With a slight abuse of notation, we identify from now on the local coordinate $\zeta$ with $z$.
\end{remark}
\subsection{Dichotomy-type definition of normal hyperbolicity}

\begin{assumption}\label{ass:hyperbolicity}
The invariant manifold $\Mzero$ is normally hyperbolic. 
\end{assumption}
Normal hyperbolicity is a fundamental dynamical concept in the analysis of invariant manifolds. It captures dominant transverse motion towards or away from the invariant manifold relative to the tangential motion along the manifold, i.e., the dynamics of the variational equation \eqref{eq:var_eq}  exhibit an {\em exponential dichotomy} in directions \textit{transverse} to $M_0$, while remaining {\em `neutral'} (i.e., non-exponential) in directions \textit{tangent} to $M_0$.
Specifically, for each periodic orbit $\gamma_0(x,t)$ within $M_0$, there exist two $\Phi(x,t,t_0)$-invariant subspaces of $\mathbb{R}^n$, denoted $T_z M_0$ and ${\mathcal N}_z$, $z=\Gamma_0(x,\phi) \in M_0$:
\begin{enumerate}
    \item \textbf{Normal Space ${\mathcal N}_z = {\mathcal N}^s_z \oplus {\mathcal N}^u_z$:} This $(n-(k+1))$-dimensional invariant (dynamic) subspace, obliquely transverse (i.e., transverse but not necessarily orthogonal) to $M_0$, splits into stable (${\mathcal N}^s_z$) and unstable (${\mathcal N}^u_z$) invariant sub-bundles.
    \begin{itemize}
        \item For vectors $v^s \in {\mathcal N}^s_z$, solutions exponentially contract towards $M_0$. There exist constants $K \ge 1$ and $\lambda_s > 0$ such that:
            $$||\Phi(x,t,t_0) v^s|| \le K e^{-\lambda_s(t-t_0)} ||v^s||\,,\quad \mbox{for }t\ge t_0\, .$$
        \item For vectors $v^u \in {\cal N}^u_z$, solutions exponentially expand away from $M_0$. There exist constants $K \ge 1$ and $\lambda_u > 0$ such that:
            $$||\Phi(x,t,t_0) v^u|| \le K e^{\lambda_u(t-t_0)} ||v^u||\,,\quad \mbox{for }t\le t_0\,.$$
    \end{itemize}
    The constant $\lambda=\min \{\lambda_s,\lambda_u\}$ is the `normal hyperbolicity' exponent, and it is bounded away from zero.
    \item \textbf{Tangent Space $T_z M_0$:} This $(k+1)$-dimensional invariant subspace corresponds to the `neutral' dynamics along and within the manifold $M_0$. Solutions starting in $T_z M_0$ remain in $T_{\Phi(x,t,t_0)z} M_0$ and do not exhibit exponential growth or decay relative to each other. 
\end{enumerate}
Normal hyperbolicity guarantees the persistence of the invariant manifold and its foliation under small perturbations: 
\noindent
\begin{theorem}[Fenichel \cite{fenichel1979}: Persistence of the Invariant Manifold] \label{thm:Fenichel}
For a sufficiently small $0< \varepsilon\ll 1$, there exists an invariant manifold $M_\varepsilon$ for the full system \eqref{eq:intro_system} that is $C^r$-diffeomorphic to $M_0$ and lies an $O(\varepsilon)$ distance from $M_0$. The dynamics on $M_\varepsilon$ are an $O(\varepsilon)$ perturbation of the dynamics on $M_0$. 
\end{theorem}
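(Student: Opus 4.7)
The plan is to construct $\Meps$ by the parametrisation method, seeking simultaneously an embedding $\Gamma_\eps : U\times \Sone \to \R^n$ and a reduced vector field $r_\eps$ on the parametrisation chart as formal power series
\begin{equation*}
\Gamma_\eps = \Gamma_0 + \sum_{i\ge 1} \eps^i \Gamma_i, \qquad r_\eps = r_0 + \sum_{i\ge 1} \eps^i r_i,
\end{equation*}
with $\Gamma_0$ and $r_0 = (0,\omega(x))^\top$ determined by Assumption \ref{ass:unperturbed} and equation \eqref{eq:conjugacy}. The invariance of $\Meps$ is encoded by the generalised conjugacy equation
\begin{equation*}
D\Gamma_\eps(x,\phi)\, r_\eps(x,\phi) = F(\Gamma_\eps(x,\phi),\eps),
\end{equation*}
and matching powers of $\eps$ produces, at each order $k\ge 1$, a linear cohomology-type equation of the form
\begin{equation*}
\omega(x)\, \partial_\phi \Gamma_k - A(x,\phi/\omega(x))\, \Gamma_k + D\Gamma_0(x,\phi)\, r_k = R_k(x,\phi),
\end{equation*}
whose right-hand side $R_k$ is a known polynomial expression in $\Gamma_0,\ldots,\Gamma_{k-1}$, $r_0,\ldots,r_{k-1}$ and the derivatives of $F$ along $\Gamma_0$.

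The technical heart of the argument is to solve this equation order-by-order using Assumption \ref{ass:hyperbolicity}. For each fixed $x$, Floquet theory applied to the variational equation \eqref{eq:var_eq} yields a spectral decomposition of the linear operator on $\tau(x)$-periodic sections of $\R^n$; the exponential dichotomy guarantees that this operator is invertible on the normal bundle $\mathcal{N}_z = \mathcal{N}^s_z\oplus \mathcal{N}^u_z$ and has a $(k+1)$-dimensional kernel along the tangent bundle $T_zM_0$. The Fredholm alternative then splits the unknowns cleanly: applying the normal projector $\PiN$ to the order-$k$ equation uniquely determines the normal component of $\Gamma_k$ via inversion against $\PiN R_k$, while applying the tangential projector $\PiM$ yields a solvability condition that fixes $r_k$. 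The tangential component of $\Gamma_k$ remains a gauge freedom reflecting the choice of parametrisation of $\Meps$ and can be normalised arbitrarily. Iterating produces all $(\Gamma_k,r_k)$ and, in particular, the leading averaged flow on the slow variables $x$ as well as its higher-order corrections.

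To promote the formal construction to a genuine $C^r$-invariant manifold, I would truncate the series at a sufficiently large order $N$ to obtain an approximate embedding whose conjugacy defect is $O(\eps^{N+1})$, and then close the argument by a contraction mapping in a weighted Banach space of sections of the normal bundle over $\Mzero$. The exponential dichotomy again supplies the Green's-function bounds that make the perturbation a strict contraction for small enough $\eps$, and the fixed point delivers the true $\Meps$, which by construction lies $O(\eps)$ from $\Mzero$ and carries dynamics that are an $O(\eps)$ perturbation of those on $\Mzero$. The main obstacle will be the solvability step: the tangent bundle $T_zM_0$ contains two structurally different neutral directions --- the flow direction $\partial_\phi \Gamma_0$, degenerate because of the time-translation symmetry of the periodic orbit, and the $k$ parameter directions $\partial_x \Gamma_0$, degenerate because each orbit sits in a family. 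Disentangling these, choosing a consistent tangential gauge for $\Gamma_k$, and verifying that the Fredholm condition on the tangent bundle always admits a unique $r_k$ independently of coordinates, is where the coordinate-independent character of the theorem must be made precise, and will require careful use of the Floquet adjoint together with the projectors $\PiS$, $\PiR$, $\PiN$ and $\PiM$ sketched in Figure \ref{fig:embed-averaging}.
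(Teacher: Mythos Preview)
The paper does not prove Theorem~\ref{thm:Fenichel}. It is stated as a classical result, attributed to Fenichel \cite{fenichel1979}, and invoked as background that guarantees persistence of $M_\eps$; no proof is given or attempted. What the paper develops in Section~\ref{sec:para} is the parametrisation method as a \emph{constructive complement} to Fenichel's theorem: it produces formal series $\Gamma_\eps$ and $r_\eps$ whose truncations satisfy the conjugacy equation up to $\mathcal{O}(\eps^{m+1})$ (Theorem~\ref{thm:skew_product}), but the paper never closes the loop to an exact invariant manifold via a contraction argument.

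Your proposal is therefore not a comparison candidate but rather a genuine proof sketch that goes beyond the paper. The formal half of your plan coincides almost exactly with Section~\ref{sec:para}: the cohomological equation you write is the homological equation \eqref{eq:homological}, your Fredholm/projector decomposition is the three-step procedure of Section~\ref{subsec:solnprocedure}, and your remark on the two kinds of neutral directions is the content of Section~\ref{sec:subsplitting}. The second half --- truncating at order $N$ and running a contraction in a weighted Banach space of normal sections, using the dichotomy for Green's-function bounds --- is the standard parametrisation-method route to an exact manifold (as in \cite{haro2016}), and is a legitimate alternative proof of Fenichel persistence. The paper simply does not carry this out; it takes the existence of $M_\eps$ for granted and focuses on computing the asymptotic expansion of the embedding and the averaged flow.
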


\begin{remark}
In our setup, this persistent invariant manifold $M_\varepsilon$ is also foliated by `periodic' fibres which form an invariant foliation of the flow on $M_\eps$. Note, the family of `periodic' fibres (closed fibres) is invariant under the flow, i.e., fibres are mapped to fibres under the flow, but the individual fibres are not invariant--a consequence of Fenichel theory. This slow drift across the fibres is precisely the averaged flow that this paper aims to construct.
\end{remark}

\subsection{Monodromy matrix and the normal bundle ${\cal N}=\cup_{z\in M_0} {\cal N}_z$}\label{sec:mono-and-normal-bundle}

\begin{definition}
    For fixed $x\in\mathbb{R}^k$, the monodromy matrix is defined as 
$$
M(x,t_0) := \Phi(x,t_0+\tau(x),t_0)\,,
$$ i.e., the evolution of the fundamental matrix solution over one period.  
\end{definition}
\noindent
Floquet's theorem states that 
\begin{equation}\label{eq:Floquet}
\Phi(x,t,t_0) = P(x,t)e^{B(x)(t-t_0)}P^{-1}(x,t_0)\,,   
\end{equation}
where $P(x,t)$ is a regular $\tau(x)$-periodic square matrix, and $B(x)$ is a constant (in time) matrix.
\begin{definition}\label{def:FloquetExp}
    The eigenvalues of $B(x)$ are the Floquet exponents, $\mu_j(x),\,j=1,\ldots n$. The corresponding eigenvalues of the monodromy matrix $$
    M(x,t_0)=P(x,t_0)e^{B(x)\tau(x)}P^{-1}(x,t_0)
    $$ are the Floquet multipliers, $\rho_j(x)=e^{\mu_j(x)\tau(x)},\,j=1,\ldots n$.
\end{definition}
\begin{remark}
    From \eqref{eq:Floquet} it is clear that the eigenvalues (Floquet exponents) of $B(x)$ are independent of the initial time $t_0$, and we also see from Definition~\ref{def:FloquetExp} that the family of monodromy matrices $M(x,t_0)$ is similar. Hence, the choice of initial time $t_0$ is irrelevant, and we have
    $$
    \Phi(x,t+\tau(x),t_0)=\Phi(x,t,t_0)M(x,t_0)\,.
    $$
\end{remark}
Assumption~\ref{ass:hyperbolicity} directly translates to the properties of these Floquet exponents/multipliers:
\begin{itemize}
    \item There are exactly $(k+1)$ {\em trivial} Floquet exponents $\mu_j(x)=0$ which correspond to the $(k+1)$ Floquet multipliers $\rho_j(x)=1$, $j=1,\ldots k+1$. The generalized eigenspace associated with these trivial exponents is precisely the tangent space $T_z M_0$ described above.
    \item The remaining $n-(k+1)$ Floquet exponents have non-zero real parts $\text{Re}(\mu_j(x)) \ne 0$, $j=1,\ldots n-(k+1)$.
        Those with $\text{Re}(\mu_j(x)) < 0$ correspond to the stable normal bundle ${\cal N}^s$. Their corresponding Floquet multipliers have $|\rho_j(x)| < 1$.
        Those with $\text{Re}(\mu_j(x)) > 0$ correspond to the unstable normal bundle ${\cal N}^u$. Their corresponding Floquet multipliers have $|\rho_j(x)| > 1$.
\end{itemize}
The constant $\lambda$ in the exponential dichotomy is $\min_{j} |\text{Re}(\mu_j(x))|$ for the non-trivial Floquet exponents.
An important relationship is obtained by differentiating \eqref{eq:Floquet} with respect to time $t$ which yields a differential equation for the periodic matrix $P(x,t)$:
\begin{equation}\label{eq:Floquet-periodic}
D_t P(x,t) + P(x,t)B(x) = DF_0(\gamma_0(x,t)) P(x,t)\,.
\end{equation}

\subsubsection{Basis of $T_zM_0$ (tangential bundle) and ${\cal N}_{z}$ (normal bundle)}
Let $b_{M_i}(x),\,i=1,\ldots (k+1)$ denote a basis of generalised eigenvectors for the trivial exponents of $B(x)$, and let 
$$
    b_M(x)=(b_{M_1}(x),\ldots b_{M_{k+1}}(x))
    $$ 
be the corresponding $n\times (k+1)$ matrix with the $b_{M_i}$'s as column vectors, i.e., we have 
$$
B(x)b_M(x)=b_M(x)J_M(x)
$$ where $J_M(x)$ is a $(k+1)$ square matrix, the corresponding Jordan-block matrix.

\noindent
Similarly, let
$b_{N_j}(x),\,j=1,\ldots n-(k+1)$ denote a basis of generalised eigenvectors for the non-trivial exponents of $B(x)$, and let 
$$
b_N(x)=(b_{N_1}(x),\ldots b_{N_{n-(k+1)}}(x))
$$ 
be the corresponding $n\times (n-(k+1))$ matrix with the $b_{N_j}$'s as column vectors, i.e., we have 
$$B(x)b_N(x)=b_N(x)J_N(x)$$ 
where $J_N(x)$ is a $n-(k+1)$ square matrix, the corresponding Jordan-block matrix.
%
Since both spaces are invariant we have
$$
    B(x)b(x)=b(x)J(x)
    \quad\mbox{with}\quad
    b(x)= (b_M(x)\,b_N(x))
    \quad\mbox{and}\quad
    J(x)=
    \begin{pmatrix}
     J_M(x) & O \\
     0 & J_N(x)
    \end{pmatrix}
    \,.
$$
%
%
We use the basis matrix $b(x)$ 
to rewrite \eqref{eq:Floquet} as
$$
\Phi(x,t,t_0) = P(x,t) b(x)e^{J(x)(t-t_0)}b^{-1}(x)P^{-1}(x,t_0)
$$
and define composite $\tau(x)$-periodic sub-matrices in $t$
$$
\begin{aligned}
    v_M(x,t) &:=P(x,t)b_M(x)=\Phi(x,t,t_0) v_M(x,t_0) e^{-J_M(x)(t-t_0)}\,,\quad 
v_M(x,t_0)=P(x,t_0)b_M(x)\,,\\
v_N(x,t) &:=P(x,t)b_N(x)=\Phi(x,t,t_0) v_N(x,t_0) e^{-J_N(x)(t-t_0)}\,,\quad 
v_N(x,t_0)=P(x,t_0)b_N(x)\,.
\end{aligned}
$$
These describe the transport of the basis vectors $v_{M_i}(x,t)$ between tangent spaces $T_{z}M_0$ along the periodic orbit $\gamma_0(x,t)$, respectively the transport of the basis vectors $v_{N_j}(x,t)$ between normal spaces ${\cal N}_{z}$ along the periodic orbit $\gamma_0(x,t)$. 
%
%
We can also define them in $(x,\phi)$-coordinates:
$$
\begin{aligned}
    V_M(x,\phi) &:= v_M(x,\omega^{-1}\phi)=
P(x,\omega^{-1}\phi)b_M(x)\,,\quad 
V_M(x,\phi_0)=v_M(x,\omega^{-1}\phi_0)\,,\\
V_N(x,\phi) &:= v_N(x,\omega^{-1}\phi)=
P(x,\omega^{-1}\phi)b_N(x)\,,\quad 
V_N(x,\phi_0)=v_N(x,\omega^{-1}\phi_0)\,.
\end{aligned}
$$
The tangential and normal `field' then fulfills \eqref{eq:Floquet-periodic},
$$
\begin{aligned}
    D_t v_M(x,t) + v_M(x,t)J_M(x) = DF_0(\gamma_0(x,t)) v_M(x,t) \,, \\
    D_t v_N(x,t) + v_N(x,t)J_N(x) = DF_0(\gamma_0(x,t)) v_N(x,t) \,, 
\end{aligned}
$$
and, hence,
$$
\begin{aligned}
    D V_M(x,\phi)r_0(x,\phi) + V_M(x,\phi)J_M(x) &=
\omega(x) D_{\phi}V_M(x,\phi)+ V_M(x,\phi)J_M(x)
= DF_0(\gamma_0(x,\omega^{-1}\phi)) V_M(x,\phi) \,,\\
D V_N(x,\phi)r_0(x,\phi) + V_N(x,\phi)J_N(x) &=
\omega(x) D_{\phi}V_N(x,\phi)+ V_N(x,\phi)J_N(x)
= DF_0(\gamma_0(x,\omega^{-1}\phi)) V_N(x,\phi) \,.
\end{aligned}
$$

\subsection{A further splitting of the tangent bundle $TM_0=TR_0 \oplus TS_0$}\label{sec:subsplitting}

The tangent space $T_z\mathbb{R}^n$ at any point $z=\gamma_0(x,t)$ on $M_0$ can be split further to
$$
T_z\mathbb{R}^n = T_z M_0 \oplus {\cal N}_z = T_z R_0 \oplus T_z S_0 \oplus {\cal N}_z\,;
$$
see Figure~\ref{fig:embed-averaging}.
\begin{itemize}
    \item \textbf{Basis of $T_z R_0$ (tangential to the periodic orbit):}
    This space is one-dimensional and spanned by the velocity vector of the periodic orbit:
    $$
    v_R(x,t) = \frac{\partial}{\partial t}\gamma_0(x,t) = D_t\gamma_0(x,t) =F_0(\gamma_0(x,t))\,.
    $$
    This vector $v_R(x,t)$ is $\tau(x)$-periodic in $t$. We also define it in the $(x,\phi)$ coordinates:
    $$
    V_R(x,\phi):=v_R(x,\omega^{-1}\phi)=F_0(\Gamma_0(x,\phi))=
    D\Gamma_0(x,\phi)r_0(x,\phi)=D_\phi\Gamma_0(x,\phi)\,\omega(x)\,.
    $$
    Note that $D_tv_R(x,t)=DF_0(\gamma_0(x,t))v_R(x,t)$ and, hence,
    $$
    DV_R(x,\phi)r_0(x,\phi)=\omega(x) D_{\phi} V_{R}(x,\phi)=
    DF_0(\Gamma_0(x,\phi))D\Gamma_0(x,\phi)r_0(x,\phi)=DF_0(\Gamma_0(x,\phi))V_R(x,\phi)
    \,.
    $$
Finally, we can transport the basis vector $v_R(x,t_0)$ between the tangent spaces at different points along the periodic orbit via the transition matrix, i.e., 
$$
v_R(x,t)=\Phi(x,t,t_0)v_R(x,t_0)\,,
$$
respectively
$$
V_R(x,\phi)=\Phi(x,\omega^{-1}\phi,\omega^{-1}\phi_0)V_R(x,\phi_0)\,,
$$

\begin{remark}
    This basis eigenvector $v_R(x,t)$ respectively $V_R(x,\phi)$ is part of the generalised basis vectors in $v_M(x,t)$ respectively $v_M(x,\phi)$.
\end{remark}
    \item \textbf{Basis of $T_z S_0$ (phase-preserving tangent space):}
    The frequency  $\omega(x) = 2\pi/\tau(x)$ of the periodic orbits varies with $x\in U\subset\mathbb{R}^k$.
    We are looking for a `phase-preserving' basis, i.e., we define tangent directions with respect to $x_j$, $j=1,\ldots k$, that account for the changing frequency across the family, ensuring that a change in $x_j$ does not induce a phase slip relative to the unperturbed orbit.
    Recall, the phase of an orbit at time $t$ is $\phi(x,t) = \omega(x)t$. To define a phase-preserving derivative with respect to $x_j$, we implicitly allow $t$ to vary such that the phase $\phi$ remains constant. Taking the total derivative of $\phi$ with respect to $x_j$ gives:
    $$
    \frac{d\phi}{dx_j} = \frac{\partial \omega}{\partial x_j} t + \omega(x) \frac{\partial t}{\partial x_j} = D_{x_j}\omega(x) t+\omega(x)D_{x_j}t =
    0\,.
    $$
    From this, the required time shift to maintain constant phase is:
    $$
    \left. D_{x_j} t\right|_{\phi=\text{const}} =  
    - \frac{t}{\omega(x)} D_{x_j}\omega(x)= \frac{t}{\tau(x)} D_{x_j}\tau(x)\,.
    $$
    Using the chain rule, the phase-preserving basis vectors for $T_z S_0$ are defined as:
    $$
    \begin{aligned}
        v_{S_j}(x,t) & =  
    D_{x_j}\gamma_0(x,t) + D_{t}\gamma_0(x,t) \left. D_{x_j}t \right|_{\phi=\text{const}}\\
     & = D_{x_j}\gamma_0(x,t) + \frac{t}{\tau(x)} D_{x_j}\tau(x) D_{t}\gamma_0(x,t)\\
    & = D_{x_j}\gamma_0(x,t) - \frac{t}{\omega(x)} D_{x_j}\omega(x) D_{t}\gamma_0(x,t)\,,\qquad \mbox{for } j=1,\dots,k\,.
    \end{aligned}
$$
    These $k$ vectors $(v_{S_1},\ldots,v_{S_k})$ form a basis for $T_z S_0$ and are linearly independent of $v_R(x,t)$ and from each other. They are $\tau(x)$-periodic in $t$, i.e., $v_{S_j}(x,t)=v_{S_j}(x,t+\tau(x))$.  
    
    In $(x,\phi)$-coordinates, these `phase-preserving' base vectors are then simply
    $$
    \begin{aligned}
        V_{S_j}(x,\phi):=v_{S_j}(x,\omega^{-1}\phi) 
        &= D_{x_j}\Gamma_0(x,\phi)\,,
    \end{aligned}
    $$
which are $2\pi$-periodic in $\phi$ (since $\Gamma_0(x,\phi)$ is also $2\pi$-periodic in $\phi$).

    Note that 
    $$D_tv_{S_j}(x,t)=DF_0(\gamma_0(x,t))v_{S_j}(x,t)-\omega^{-1}(x)D_{x_j}\omega(x) v_{R}(x,t)$$ 
    and, hence,
    $$
    \begin{aligned}
        DV_{S_j}(x,\phi)r_0(x,\phi)
         =D_\phi V_{S_j}(x,\phi)\omega(x) &=
    DF_0(\Gamma_0(x,\phi))V_{S_j}-
    D_{x_j}\omega(x)D_\phi\Gamma_0(x,\phi)\\
    &=DF_0(\Gamma_0(x,\phi))V_{S_j}-
    \omega^{-1}(x) D_{x_j}\omega(x) V_R
    \,.
    \end{aligned}
    $$
\end{itemize}
%
Finally, let $v_S:=(v_{S_1},\ldots,v_{S_k})$ be the $n\times k$ matrix formed by these basis vectors. The $n\times (k+1)$ matrix $v_M:=(v_R,v_S)$ includes all basis vectors of $T_zM_0$. Similarly, we define matrices $V_S$ and $V_M$.

\begin{remark}
    The basis vectors in $v_S$ (respectively $V_S$) are, in general, not invariant under the action of the transition matrix, because of the phase correction, i.e., the algebraic and geometric multiplicity of the zero eigenvalue along $M_0$ is not the same. On the other hand, $TR_0$ itself is invariant, and so is the whole tangent space $TM_0=TR_0\oplus TS_0$. Thus, with respect to these basis vectors $v_M:=(v_R,v_S)$ respectively $V_M:=(V_R,V_S)$, the Jordan-block matrix $J_{M}$ is triangular.
    
    The matrix $V_M$ constructed here from geometric considerations is a specific choice of basis for the tangent bundle $TM_0$. It spans the same space as the basis $V_M$ derived from the Floquet analysis in section~\ref{sec:mono-and-normal-bundle}, though the basis vectors themselves may differ.
\end{remark}

\subsection{Oblique projection operators}\label{sec:proj-op}

Based on our bundle splitting, we define oblique projection operators $\PiR(x,t)$, $\PiS(x,t)$, and $\PiN(x,t)$, $(x,t)\in U\times\mathbb{S}^1$, such that $\PiR(x,t) + \PiS(x,t) + \PiN(x,t) = \mathbb{I}_n$. We also define $\PiM(x,t)=\PiR(x,t)+\PiS(x,t)$.
Since $T_z R_0$, $T_z M_0$ and $N_z$ ($z=\gamma_0(x,t)$) are invariant under the flow $\Phi(x,t,t_0)$, the projections 
$\PiM(x,t)$ onto $T_{\gamma(x,t)}M_0$, 
$\PiR(x,t)$ onto $T_{\gamma(x,t)}R_0$ and $\PiN(x,t)$ onto $N_{\gamma(x,t)}$ can be obtained from the initial projections 
$\PiM(x,t_0)$, 
$\PiR(x,t_0)$ and $\PiN(x,t_0)$ via 
$$\Pi_{i}(x,t) := \Phi(x,t,t_0) \Pi_{i}(x,t_0) \Phi(x,t,t_0)^{-1}\,,\qquad i \in \{R,M,N\}\,.$$
Hence, the projection $\PiS(x,t)$ onto $T_{\gamma(x,t)}S_0$ is then defined as 
$$\PiS(x,t):=\PiM(x,t)-\PiR(x,t)=\mathbb{I}_n-\PiN(x,t)-\PiR(x,t)\,.$$
All these time-dependent projection operators have the semi-group property
$$
\Pi_i(x,t)\Phi(x,t,s)=\Phi(x,t,s)\Pi_i(x,s)\,,\quad\forall t,s\in\mathbb{R}
$$
and they solve
$$
D_t\Pi_i(x,t)= A(x,t)\Pi_i(x,t)-\Pi_i(x,t)A(x,t)=:[A(x,t),\Pi_i(x,t)]\,.
$$
\begin{remark}
Let $\PiNs(x,t)$ and $\PiNu(x,t)$ be the projections onto the stable and unstable normal bundles at $z=\gamma_0(x,t)$.
Normal hyperbolicity implies
    for the stable normal bundle:
    $$||\Phi(x,t,s)\PiNs(x,s)|| \le K e^{-\lambda_s(t-s)} \quad \text{for } t \ge s\,,$$
    and for the unstable normal bundle:
    $$||\Phi(x,t,s)\PiNu(x,s)|| \le K e^{\lambda_u(t-s)} \quad \text{for } t \le s\,.$$
\end{remark}
%
%
\noindent
Next, we outline two (possible) methods for the construction of these oblique projection operators

\subsubsection{Constructing projections from a basis and its dual}\label{sec:projection1}

The most direct way to define the projection operators is to construct them from an explicit basis for the tangent and normal spaces.

\paragraph{1. Form a basis for the full space.}
At any point $z = \Gamma_0(x,\phi)$ on the manifold, we have already defined a basis for the tangent space $T_z M_0$ (the columns of the $n \times (k+1)$ matrix $V_M(x,\phi)$) and the normal space $\mathcal{N}_z$ (the columns of the $n \times (n-k-1)$ matrix $V_N(x,\phi)$). We can combine these into a single $n \times n$ matrix whose columns form a basis for the entire ambient space $\R^n$:
$$
W(x,\phi) = \begin{pmatrix} V_M(x,\phi) & 
V_N(x,\phi) \end{pmatrix}
$$
Since the tangent and normal bundles form a direct sum, this matrix $W$ is invertible.

\paragraph{2. Compute the dual basis.}
The rows of the inverse matrix $W^{-1}$ form the \textit{dual basis}. We partition this inverse matrix conformally with $W$:
$$
W(x,\phi)^{-1} = \begin{pmatrix} V_M^*(x,\phi) \\ 
V_N^*(x,\phi) \end{pmatrix}
$$
Here, $V_M^*$ is a $(k+1) \times n$ matrix whose rows are the dual basis vectors to the tangent space, and $V_N^*$ is an $(n-k-1) \times n$ matrix for the normal space, i.e., they satisfy: 
$$V_M^* V_M = \mathbb{I}_{k+1},\quad V_N^* V_N = \mathbb{I}_{n-k-1}, \quad V_M^* V_N = \mathbb{O}_{k+1,n-k-1},\quad V_N^* V_M = \mathbb{O}_{n-k-1,k+1}\,.$$

\paragraph{3. Construct the projectors.}
The oblique projection operators are then given by the outer product of the basis and dual basis vectors:
\begin{itemize}
    \item \textbf{Projection onto the tangent bundle $TM_0$:}
    $$ \Pi_M(x,\phi) = V_M(x,\phi) V_M^*(x,\phi) $$
    \item \textbf{Projection onto the normal bundle $\mathcal{N}$:}
    $$ \Pi_N(x,\phi) = V_N(x,\phi) V_N^*(x,\phi) $$
\end{itemize}
By construction, these operators satisfy $\Pi_M + \Pi_N = \mathbb{I}_n$, $\Pi_M^2 = \Pi_M$, $\Pi_N^2 = \Pi_N$, and $\Pi_M \Pi_N = \Pi_N \Pi_M = \mathbb{O}_n$. This method provides a direct and computationally feasible way to find the projectors once the basis vectors are known.

\paragraph{4. Construct sub-projectors for the tangent bundle.}
The same principle allows for the decomposition of the tangent space projector $\Pi_M$ into its constituent parts. Recall from Section~\ref{sec:subsplitting} that the basis for the tangent bundle, $V_M(x,\phi)$, is the concatenation of the basis for the flow direction, $V_R(x,\phi)$ (an $n \times 1$ matrix), and the basis for the phase-preserving directions, $V_S(x,\phi)$ (an $n \times k$ matrix):
$$
V_M(x,\phi) = \begin{pmatrix} V_R(x,\phi) & V_S(x,\phi) \end{pmatrix} \, .
$$
The dual basis matrix $V_M^*(x,\phi)$ can be partitioned conformally:
$$
V_M^*(x,\phi) = \begin{pmatrix} V_R^*(x,\phi) \\ V_S^*(x,\phi) \end{pmatrix}
$$
where $V_R^*$ is $1 \times n$ and $V_S^*$ is $k \times n$. By the properties of the dual basis, these partitions satisfy:
$$
\begin{pmatrix} V_R^* V_R & V_R^* V_S \\ V_S^* V_R & V_S^* V_S \end{pmatrix} = \begin{pmatrix} 1 & \mathbb{O}_{1,k} \\ \mathbb{O}_{k,1} & \mathbb{I}_k \end{pmatrix}
$$
This allows for the immediate construction of the sub-projectors:
\begin{itemize}
    \item \textbf{Projection onto the flow bundle $TR_0$:}
    $$ \Pi_R(x,\phi) = V_R(x,\phi) V_R^*(x,\phi) $$
    \item \textbf{Projection onto the phase-preserving bundle $TS_0$:}
    $$ \Pi_S(x,\phi) = V_S(x,\phi) V_S^*(x,\phi) $$
\end{itemize}
These operators are themselves projectors ($\Pi_R^2 = \Pi_R$, $\Pi_S^2 = \Pi_S$), are mutually annihilating ($\Pi_R \Pi_S = \Pi_S \Pi_R = \mathbb{O}_n$), and sum to the tangent space projector: $\Pi_R(x,\phi) + \Pi_S(x,\phi) = \Pi_M(x,\phi)$.

\subsubsection{Constructing projections from a Floquet-geometric basis}

Alternatively, the projectors can be constructed using a hybrid approach that combines the spectral properties of the Floquet matrix $B(x)$ with the geometric definitions of the tangent basis vectors. This method is particularly powerful as it uses each tool for what it defines best: Floquet theory to identify the invariant normal bundle, and geometry to provide the physically meaningful split of the tangent bundle.

The procedure is as follows:
\begin{enumerate}
    \item \textbf{Identify the normal space using Floquet theory.} From the Floquet matrix $B(x)$, compute its semi-simple part, $C(x)$. The normal bundle $\mathcal{N}$ corresponds to the image of this matrix. Therefore, compute a basis for the normal bundle as the columns of an $n \times (n-k-1)$ matrix:
    $$ V_N(x) = \text{basis}(\text{Im}(C(x))) $$

    \item \textbf{Identify the tangent basis using geometry.} From the geometric definitions in Section 2.4, construct the basis for the tangent bundle, which is already split into the flow and phase-preserving directions. These are the columns of the $n \times (k+1)$ matrix $V_M$:
    $$ V_M(x,\phi) = \begin{pmatrix} V_R(x,\phi) & V_S(x,\phi) \end{pmatrix} $$
    
    \item \textbf{Form a full basis for $\mathbb{R}^n$.} Combine the bases for the tangent and normal bundles to form a single, invertible $n \times n$ matrix whose columns span the entire ambient space. Note that the geometric part $V_M$ is $\phi$-dependent, while the normal part $V_N$ is not. We evaluate at a reference phase $\phi_0$:
    $$ W(x,\phi_0) = \begin{pmatrix} V_M(x,\phi_0) & V_N(x) \end{pmatrix} = \begin{pmatrix} V_R(x,\phi_0) & V_S(x,\phi_0) & V_N(x) \end{pmatrix} $$
    
    \item \textbf{Compute the dual basis.} Invert the full basis matrix $W$ to get the dual basis, which is given by its rows. We partition the inverse conformally with $W$:
    $$ W(x,\phi_0)^{-1} = \begin{pmatrix} V_R^*(x,\phi_0) \\ V_S^*(x,\phi_0) \\ V_N^*(x) \end{pmatrix} $$
    
    \item \textbf{Construct all oblique projectors.} Using the basis and dual basis vectors, the complete set of projectors at the reference phase $\phi_0$ is given by the outer products:
    \begin{align*}
        \Pi_R(x,\phi_0) &= V_R(x,\phi_0) V_R^*(x,\phi_0) \\
        \Pi_S(x,\phi_0) &= V_S(x,\phi_0) V_S^*(x,\phi_0) \\
        \Pi_N(x,\phi_0) &= V_N(x) V_N^*(x)
    \end{align*}
    These projectors can then be propagated to any phase $\phi$ along the periodic orbits by conjugation with the flow operator.
\end{enumerate}

\section{The parametrisation method}\label{sec:para}

Section \ref{sec:problem_setup} established the geometric setup under the unperturbed flow ($\eps=0$), namely the existence of a normally hyperbolic manifold $\Mzero$ foliated by periodic orbits. We now seek to determine the fate of this manifold and the dynamics upon it for $\eps>0$. Fenichel's theory \cite{fenichel1979} guarantees that a nearby invariant manifold, $\Meps$, persists (Theorem~\ref{thm:Fenichel}). The parametrisation method provides a constructive and systematic framework not only to prove this persistence but also to compute both the embedding of $\Meps$ and the reduced, averaged flow on it as formal power series in $\eps$.

\begin{assumption}
    We assume the embedding of this manifold $M_\epsilon$ is given as a series expansion in $\epsilon$:
\begin{equation}
    \Gamma(x,\phi,\epsilon) = \sum_{i=0}^m \epsilon^i \Gamma_i(x,\phi)\,,
\end{equation}
with $m \ge 1$.
The corresponding vector field $r(x,\phi,\epsilon)$ that describes the dynamics on the perturbed manifold $M_\epsilon$ (when expressed in the $(x,\phi)$ coordinates inherited from $M_0$) is also expanded in a series:
\begin{equation}
    r(x,\phi,\epsilon) = \sum_{i=0}^m \epsilon^i r_i(x,\phi)    \, .
\end{equation}
\end{assumption}
\noindent
Recall, the original system dynamics \eqref{eq:intro_system} are given by 
$$z' = F(z,\epsilon) = \sum_{i=0}^m \epsilon^iF_i(z)\,.$$

\subsection{The invariance condition}

The core invariance condition states that the velocity vector obtained by differentiating the embedding $\Gamma(x,\phi,\epsilon)$ with respect to time (using the intrinsic flow $r(x,\phi,\epsilon)$ on the manifold) must be equal to the original system's vector field $F$ evaluated on the manifold $M_\epsilon$:
\begin{equation}\label{eq:invariance-full}
    D\Gamma(x,\phi,\eps)r(x,\phi,\eps)=D_x \Gamma(x,\phi,\eps) r_x(x,\phi,\eps) + D_\phi \Gamma(x,\phi,\eps) r_\phi(x,\phi,\eps) = F(\Gamma(x,\phi,\eps),\eps)
\end{equation}
Here, $D_x \Gamma$ is the Jacobian with respect to $x$ (an $n \times k$ matrix), $D_\phi \Gamma$ is the Jacobian with respect to $\phi$ (an $n \times 1$ vector), and $r(x,\phi,\eps) = \begin{pmatrix} r_x(x,\phi,\epsilon) \\ r_\phi(x,\phi,\eps) \end{pmatrix}$ is the $(k+1)$-dimensional vector field on $M_\eps$ in the local coordinate chart $U\times \mathbb{S}^1$.

\subsubsection*{Solving order by order using projections}

We substitute the above series expansions for $\Gamma(x,\phi,\eps)$, $r(x,\phi,\eps)$, and $F(\Gamma(x,\phi,\eps),\eps)$ into the invariance condition \eqref{eq:invariance-full}. Collecting terms by powers of $\eps$ then yields the following:\\

\noindent 
\textbf{Order $\eps^0$:}
$$
D \Gamma_0(x,\phi) r_{0}(x,\phi) =
D_x \Gamma_0(x,\phi) r_{x,0}(x,\phi) + D_\phi \Gamma_0(x,\phi) r_{\phi,0}(x,\phi) = F_0(\Gamma_0(x,\phi))\,.
$$
Substituting $r_{x,0}=0$ and $r_{\phi,0}=\omega(x)$ gives 
$$
D_\phi \Gamma_0(x,\phi)\, \omega(x) = F_0(\Gamma_0(x,\phi))\,.$$
This equation is satisfied by the definition of $\Gamma_0$ as an embedding of the flow of $F_0$; see \eqref{eq:conjugacy}.\\

\noindent 
\textbf{Order $\eps^1$:}
$$
D \Gamma_0(x,\phi) r_{1}(x,\phi) + D \Gamma_1(x,\phi) r_{0}(x,\phi) =
DF_0(\Gamma_0) \Gamma_1 + F_1(\Gamma_0)\,.$$
Rearranging terms to group $\Gamma_1$ and $r_1$ we find 
$$
(D \Gamma_1(x,\phi) r_{0}(x,\phi) - DF_0(\Gamma_0) \Gamma_1) + D \Gamma_0(x,\phi) r_{1}(x,\phi)= F_1(\Gamma_0)\,.
$$
Substituting $r_{x,0}=0$ and $r_{\phi,0}=\omega(x)$ then gives
$$
\left( D_\phi \Gamma_1(x,\phi) \omega(x) - DF_0(\Gamma_0(x,\phi)) \Gamma_1(x,\phi) \right) + D\Gamma_0(x,\phi) r_{1}(x,\phi) = F_1(\Gamma_0(x,\phi))\,.
$$
Let $\mathcal{L}_0[\cdot]$ be the linear operator acting on $\Gamma_1$, representing its evolution along the unperturbed flow: 
$$
\mathcal{L}_0[\Gamma_1](x,\phi) := 
D \Gamma_1(x,\phi) r_{0}(x,\phi) - DF_0(\Gamma_0(x,\phi)) \Gamma_1(x,\phi) =
D_\phi \Gamma_1(x,\phi) \omega(x) - DF_0(\Gamma_0(x,\phi)) \Gamma_1(x,\phi)\,.
$$
The equation becomes:
$$
\mathcal{L}_0[\Gamma_1] + D\Gamma_0 r_{1} = F_1(\Gamma_0)\,.
$$

\noindent
\textbf{Order $\eps^j$:}
This iterative process extends to higher orders ($\eps^2, \eps^3, \dots$). At each order $j$ we obtain
$$
\mathcal{L}_0[\Gamma_j] + D\Gamma_0 r_{j} = \mathcal{G}_j(x,\phi)\,,
$$
where $\mathcal{G}_j$ is the known inhomogeneous term at order $j$, which depends on the functions $\Gamma_l, r_l$ from previous orders ($l<j$) and on the term $F_j(\Gamma_0)$. Note that $\mathcal{G}_1=F_1$.

\subsection{The homological equation and its solution}

The goal is to solve the dynamics on $M_\epsilon$ via the parametrisation method. As shown, after expanding the invariance equation, we must solve a sequence of linear problems. For each order $j \ge 1$, this is the {\em homological equation}:
\begin{equation} \label{eq:homological}
    \mathcal{L}_0[\Gamma_j] + D\Gamma_0(x,\phi) r_j = \mathcal{G}_j(x,\phi)\,,
\end{equation}
where the linear operator $\mathcal{L}_0$ is defined as
\begin{equation}
    \mathcal{L}_0[\cdot] := \omega(x)\pderiv{}{\phi}[\cdot] - DF_0(\Gamma_0(x,\phi))[\cdot]\,.
\end{equation}
At the start of step $j$, the inhomogeneity $\mathcal{G}_j$ is known from previous orders. The unknowns are the embedding correction $\Gamma_j(x,\phi)$ and the vector field correction $r_j(x,\phi)$.

\subsubsection{The solvability condition}

A direct solution for $\Gamma_j$ by inverting $\mathcal{L}_0$ is not possible. The operator $\mathcal{L}_0$ is singular because it has a non-trivial null space, which contains the tangent vectors of the manifold $M_0$. 
%
The existence of a solution is therefore governed by the {\it Fredholm alternative}. It states that for the equation $\mathcal{L}_0[\Gamma_j] = \mathcal{G}_j - D\Gamma_0 r_j$, a solution $\Gamma_j$ exists if and only if the right-hand side is orthogonal to the null space of the adjoint operator, $\ker(\mathcal{L}_0^*)$.
The adjoint operator $\mathcal{L}_0^*$ is defined with respect to the $L^2$ inner product over the periodic domain of the fast angle $\phi$, i.e.,
$$ \langle u, v \rangle_\phi := \frac{1}{2\pi} \int_0^{2\pi} u(x,\phi)^T v(x,\phi) \, d\phi\,. $$
Using integration by parts and the $2\pi$-periodicity of the functions in $\phi$, the adjoint operator is found to be:
$$ 
\mathcal{L}_0^*[\cdot] = -\omega(x)\pderiv{}{\phi}[\cdot] - DF_0(\Gamma_0(x,\phi))^T [\cdot]\,. 
$$
The null space of $\mathcal{L}_0^*$ is spanned by a basis of $k+1$ linearly independent solutions $\psi_i(x,\phi)$ for $i=1, \dots, k+1$, to the homogeneous adjoint equation $\mathcal{L}_0^*[\psi] = 0$. These basis functions must also be $2\pi$-periodic in $\phi$. 

\begin{definition}[solvability condition]
    The right hand side $\mathcal{G}_j - D\Gamma_0 r_j$ must be orthogonal to all basis vectors $\psi_m$:
\begin{equation} \label{eq:solvability_condition}
    \left\langle \mathcal{G}_j(x,\phi) - D\Gamma_0(x,\phi) r_j(x,\phi), \psi_i(x,\phi) \right\rangle_\phi = 0 \quad \text{for } i=1, \dots, k+1.
\end{equation}
\end{definition}
\noindent
This provides $k+1$ constraint equations. These are sufficient to uniquely determine the slow dynamics of the $x$-components of $r_j$, but leave the $\phi$-dependent part of the flow undetermined.

\subsubsection{Achieving a maximally reduced flow}

The undetermined $\phi$-dependent part of the flow $r_j$ can be resolved by using the other freedom in the system: the non-uniqueness of the manifold correction $\Gamma_j$. The freedom to add any tangential component to $\Gamma_j$ (any element from the null space of $\mathcal{L}_0$) is precisely the tool needed to resolve the ambiguity in $r_j$. We can use this freedom of parameterisation to systematically absorb the entire $\phi$-dependent part of the flow, justifying the simplification of solving for a flow that is independent of $\phi$.
\begin{definition}\label{def:max_reduced}
    A {\em maximally reduced flow} is one where the vector field $r(x,\phi,\eps)$ is independent of the fast angle $\phi$, i.e.,
\begin{equation}
    \begin{aligned}
    x' &= r_x(x, \eps) \\
    \phi' &= r_\phi(x, \eps)
\end{aligned}
\end{equation}
\end{definition}
\noindent
Let us decompose the homological equation \eqref{eq:homological} into its tangential and normal parts by applying projectors $\Pi_M$ and $\Pi_N$. The tangential part is:
\begin{equation} \label{eq:tangent_solve_full}
    \Pi_M(\mathcal{L}_0[\Gamma_j]) + D\Gamma_0 r_j = \Pi_M(\mathcal{G}_j)\,.
\end{equation}
The term $\Pi_M(\mathcal{L}_0[\Gamma_j])$ can be simplified to $\mathcal{L}_0[\Gamma_j^M] + \Pi_M(\mathcal{L}_0[\Gamma_j^N])$. Here, we focus on the freedom in choosing $\Gamma_j^M$.\footnote{The second term $\Pi_M(\mathcal{L}_0[\Gamma_j^N])$ contains no freedom and will be determined in the final step.}
We decompose the flow correction $r_j$ and the projected inhomogeneity $\Pi_M(\mathcal{G}_j)$ into their $\phi$-averages (denoted by a bar) and their purely oscillatory parts (denoted by a tilde):
$$ r_j(x,\phi) = \bar{r}_j(x) + \tilde{r}_j(x,\phi)\,, \qquad \Pi_M(\mathcal{G}_j) = \overline{\Pi_M \mathcal{G}_j}(x) + \widetilde{\Pi_M \mathcal{G}_j}(x,\phi)\,. $$
The tangential equation involves the choice of $\Gamma_j^M$ and the unknown $\tilde{r}_j$. We can now make the choice by defining the tangential correction $\Gamma_j^M$ to be the unique periodic solution to an equation that absorbs all oscillatory terms over which we have freedom:
\begin{equation} \label{eq:choice_of_gamma_T}
    \mathcal{L}_0[\Gamma_j^M] := \widetilde{\Pi_M \mathcal{G}_j}(x,\phi) - D\Gamma_0(x,\phi) \tilde{r}_j(x,\phi) - \widetilde{\Pi_M(\mathcal{L}_0[\Gamma_j^N])}\,. 
\end{equation}
This choice is equivalent to performing a $\phi$-dependent parameterisation of the manifold $z=\Gamma(x,\phi,\eps)$. It is guaranteed to have a solution for $\Gamma_j^M$ because the right-hand side of \eqref{eq:choice_of_gamma_T} is purely oscillatory by construction. It ensures that all oscillatory terms in the tangential equation cancel out, leaving a simple algebraic relation for the averaged parts:

\begin{equation}
    \overline{D\Gamma_0 \bar{r}_j} = \overline{\Pi_M \mathcal{G}_j} - \overline{\Pi_M(\mathcal{L}_0[\Gamma_j^N])}\,.
\end{equation}
This confirms that we can find a maximally reduced flow correction, $r_j(x) := \bar{r}_j(x)$, because any $\phi$-dependence could be absorbed into the parameterization of $M_\epsilon$. 

The constructive approach above is equivalent to enforcing a fundamental orthogonality principle. This condition provides a rule that a (unique) solution must satisfy and forms the basis of a computational procedure.
\begin{definition}
To ensure a unique solution for the manifold correction $\Gamma_j$ that is consistent with a maximally reduced flow, we require that the correction is orthogonal to the basis of the adjoint null space:
\begin{equation} \label{eq:gauge_condition1}
    \langle \Gamma_j(x,\phi), \psi_i(x,\phi) \rangle_\phi = 0 \quad \text{for } i=1, \dots, k+1.
\end{equation}
\end{definition}
\noindent
This specific choice systematically enforces the absorption of the oscillatory flow components described in the constructive approach.


\subsection{The iterative solution procedure} \label{subsec:solnprocedure}

The homological equation \eqref{eq:homological} is solved for each order $j \ge 1$ using a sequential, three-step process. 

\subsubsection*{Step 1: Determine the averaged dynamics $r_j(x)$}

The solvability condition is used to \emph{derive} a vector field correction $r_j$ that is independent of $\phi$, i.e., a solution $r_j(x,\phi) = r_j(x)$.
Substituting $r_j(x)$ into the solvability condition \eqref{eq:solvability_condition} and rearranging gives:
$$ \left\langle D\Gamma_0(x,\phi) r_j(x), \psi_i(x,\phi) \right\rangle_\phi = \left\langle \mathcal{G}_j(x,\phi), \psi_i(x,\phi) \right\rangle_\phi\,. $$
Since $r_j(x)$ is independent of the integration variable $\phi$, it can be factored out of the inner product:
$$ \left( \frac{1}{2\pi} \int_0^{2\pi} D\Gamma_0(x,\phi)^T \psi_i(x,\phi) \, d\phi \right)^T r_j(x) = \frac{1}{2\pi} \int_0^{2\pi} \mathcal{G}_j(x,\phi)^T \psi_i(x,\phi) \, d\phi \,.$$
This gives a system of $k+1$ linear algebraic equations for the $k+1$ unknown components of $r_j(x)$: 
$$ K(x) r_j(x) = b(x)\,, $$
where $K(x)$ is a $(k+1) \times (k+1)$ matrix whose entries are the averaged projections of the tangent vectors onto the adjoint basis, i.e., the $i$-th row of $K(x)$ is
$$
[K(x)]_i=\left( \frac{1}{2\pi} \int_0^{2\pi} D\Gamma_0(x,\phi)^T \psi_i(x,\phi) \, d\phi \right)^T,\quad i=1,\ldots k+1\,,
$$
and $b(x)$ is a $(k+1)$-dimensional vector from the averaged projection of the known inhomogeneity $\mathcal{G}_j$, i.e.,
$$
b_i(x) = \frac{1}{2\pi} \int_0^{2\pi} \mathcal{G}_j(x,\phi)^T \psi_i(x,\phi) \, d\phi\,,\quad i=1,\ldots k+1\,.
$$
The assumption that $M_0$ is a {normally hyperbolic manifold guarantees that the tangent and normal dynamics are spectrally separated, which in turn ensures that this matrix $K(x)$ is invertible. Therefore, this can be uniquely solved for the slow vector field correction:
\begin{equation}
    r_j(x) = K(x)^{-1} b(x)\,.
\end{equation}
%
%
This step determines the dynamics on the manifold at order $\epsilon^j$ using only the known inhomogeneity $\mathcal{G}_j$. Computationally, the application of the solvability condition via inner products with the adjoint basis vectors $\psi_m$ is equivalent to projecting the homological equation onto the tangent bundle $TM_0$ with $\PiM$ and averaging over $\phi$.



\subsubsection*{Step 2: Determine the normal correction $\Gamma_j^N(x,\phi)$}

The normal part is found by projecting the homological equation onto the normal bundle:
\begin{equation} \label{eq:normal_ode_final}
        \Pi_N (\mathcal{L}_0[\Gamma_j^N]) = \Pi_N (\mathcal{G}_j(x,\phi))\,.
    \end{equation}
Note that the terms $\mathcal{L}_0[\Gamma_j^M]$ and $D\Gamma_0 r_j$ are purely tangential and vanish under the projection $\Pi_N$.\footnote{The tangent bundle $TM_0$ is an invariant subspace for the linearized flow $DF_0(\Gamma_0)$. As a result, the operator $\mathcal{L}_0[\cdot] = \omega(x)\pderiv{}{\phi}[\cdot] - DF_0(\Gamma_0)[\cdot]$ maps tangent vectors to tangent vectors.}
The assumption of normal hyperbolicity ensures that the operator $\mathcal{L}_0$, when restricted to the normal bundle, is invertible, i.e., the operator $\mathcal{L}_0^N := \PiN \mathcal{L}_0 \PiN$ has a bounded inverse, and, hence,  
\eqref{eq:normal_ode_final} has a unique $2\pi$-periodic solution.\\

\noindent
One way to solve for the unique periodic solution is to use the variation of parameters formula. This requires the transition matrix for the dynamics projected onto the normal bundle, $\Phi_N(x, \phi, \sigma)$. This matrix is derived from the original transition matrix $\Phi(x,t,s)$ by changing variables to phase $\phi$ (where $t=\omega(x)^{-1}\phi$) and projecting:
$$ 
\Phi_N(x, \phi, \sigma) := \PiN(x,\phi) \, \Phi (x, \omega(x)^{-1}\phi, \omega(x)^{-1}\sigma) \, \PiN(x,\sigma) \,.
$$
The solution for an initial condition $\Gamma_j^N(x, \phi_0)$ given at an arbitrary initial phase $\phi_0$ is then given by the integral:
\begin{equation}
    \Gamma_j^N(x,\phi) = \Phi_N(x, \phi, \phi_0) \Gamma_j^N(x, \phi_0) + \frac{1}{\omega(x)}\int_{\phi_0}^\phi \Phi_N\left(x, \phi, \sigma\right) f_N(x,\sigma) d\sigma\,,
\end{equation}
where $f_N$ is the known right-hand side of \eqref{eq:normal_ode_final} and $\sigma$ is the integration variable for the phase. The initial condition is determined uniquely by enforcing the periodicity condition $\Gamma_j^N(x, 2\pi+\phi_0) = \Gamma_j^N(x, \phi_0)$, i.e.,
$$ 
\Gamma_j^N(x, 2\pi+\phi_0) =
M_N(x,\phi_0)\Gamma_j^N(x, \phi_0) + \frac{1}{\omega(x)}\int_{\phi_0}^{2\pi+\phi_0} \Phi_N\left(x, 2\pi+\phi_0, \sigma\right) f_N(x,\sigma) d\sigma =
\Gamma_j^N(x, \phi_0)\,,
$$
where 
$$ 
M_N(x, \phi_0) := \PiN(x,\phi_0) \, M (x,\phi_0) \, \PiN(x,\phi_0)
$$
is the projection of the full monodromy matrix $M(x,\phi_0)$ onto the normal bundle ${\cal N}$. This gives
$$ 
\Gamma_j^N(x, \phi_0) = \omega(x)^{-1}
(\mathbb{I}_n -  
M_N(x,\phi_0))^{-1}
\int_{\phi_0}^{2\pi +\phi_0} \Phi_N\left(x, 2\pi +\phi_0, \sigma\right) f_N(x,\sigma) d\sigma \,.
$$
\subsubsection*{Step 3: Determine the tangential correction $\Gamma_j^M(x,\phi)$}

A solution for $\Gamma_j$ is now guaranteed to exist, but it is not unique because any element from $\ker(\mathcal{L}_0)$ can be added. To obtain a unique solution, we must impose a final condition. This condition is what computationally enforces the reparametrisation that allows for a maximally reduced flow. The standard choice is to require the full correction $\Gamma_j$ to be orthogonal to the adjoint null space:
\begin{equation} \label{eq:gauge_condition}
    \langle \Gamma_j(x,\phi), \psi_i(x,\phi) \rangle_\phi = 0 \quad \text{for } i=1, \dots, k+1.
\end{equation}
Since $\langle \Gamma_j^N, \psi_i \rangle_\phi$ is already determined from Step 2, this condition uniquely fixes the remaining tangential component $\Gamma_j^M$ by requiring:
$$
\langle \Gamma_j^M(x,\phi), \psi_i(x,\phi) \rangle_\phi = - \langle \Gamma_j^N(x,\phi), \psi_i(x,\phi) \rangle_\phi\,,
$$
or, equivalently,
\begin{equation} \label{eq:gauge_condition2}
    \langle \Gamma_j(x,\phi), \psi_i(x,\phi) \rangle_\phi =
    \frac{1}{2\pi} \int_0^{2\pi} \Gamma_j(x,\phi)^T \psi_i(x,\phi)\, d\phi
    = 0 \quad \text{for } i=1, \dots, k+1.
\end{equation}
\begin{remark}
This orthogonality condition is not in conflict with the choice made in \eqref{eq:choice_of_gamma_T}. Rather, it is the specific condition that, when applied to the full homological equation, ensures that the tangential correction $\Gamma_j^M$ has the correct form to absorb the oscillatory terms, thus making the maximally reduced flow ansatz consistent.
\end{remark}
%
%
\noindent
This completes the procedure for order $j$. We can now proceed to order $j+1$, constructing $\mathcal{G}_{j+1}$ and repeating the process.

\begin{remark}
The procedure described uses a specific choice of reparameterization to achieve a maximally reduced flow. Other choices are possible, which would lead to a flow $r_j$ that depends on $\phi$ but might simplify the expression for the manifold correction $\Gamma_j$. The choice presented here is standard as it simplifies the slow dynamics as much as possible.
\end{remark}

\section{Main results and the averaged flow}\label{sec:results}

The parametrisation method detailed in Section \ref{sec:para} provides a constructive proof of the existence of a coordinate system in which the dynamics on the perturbed manifold $M_\eps$ take a simplified, averaged form. This section formalises that result.

\begin{theorem}[Existence of a skew-product flow]\label{thm:skew_product}
Consider system \eqref{eq:intro_system} under Assumptions \ref{ass:unperturbed} and \ref{ass:hyperbolicity}. 
There exist smooth functions $r_x: U \times \R \to \R^k$ and $r_\phi: U \times \R \to \R$ of the form
$$
\begin{aligned}
    r_x(x,\eps) &= \sum_{i=1}^m \eps^i r_{x,i}(x)  \,, \\
    r_\phi(x,\eps) &= \omega(x) + \sum_{i=1}^m \eps^i r_{\phi,i}(x)  \,,
\end{aligned}
$$
and an embedding $\Gamma: U \times \Sone \times \R \to \R^n$ of the form
$$
\Gamma(x,\phi,\eps) = \Gamma_0(x,\phi) + \sum_{i=1}^m \eps^i \Gamma_i(x,\phi)  
$$
with the following property: if $(x(t), \phi(t))$ is a solution to the skew-product system
\begin{equation}\label{eq:skew_product}
    \begin{aligned}
        x' & = r_x(x,\eps)\,,\\
        \phi' & = r_\phi(x,\eps)\,,
    \end{aligned}
\end{equation}
then $z(t) := \Gamma(x(t), \phi(t), \eps)$ is an approximate solution to the original system \eqref{eq:intro_system}, i.e., it satisfies
$$ z' = F(z, \eps) + \mathcal{O}(\varepsilon^{m+1})\,. $$
\end{theorem}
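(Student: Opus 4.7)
The plan is to prove the theorem by induction on the order $j \in \{0,1,\ldots,m\}$, using the three-step procedure of Section~\ref{subsec:solnprocedure} to construct $(\Gamma_j, r_j)$ so that the invariance condition~\eqref{eq:invariance-full} holds exactly at each order in $\eps$. Once these partial sums are in hand, the theorem follows by a direct chain-rule computation: for any solution $(x(t),\phi(t))$ of the skew-product~\eqref{eq:skew_product}, the candidate trajectory $z(t) = \Gamma(x(t),\phi(t),\eps)$ satisfies
$$
z'(t) = D\Gamma(x,\phi,\eps)\,r(x,\phi,\eps) = F(\Gamma(x,\phi,\eps),\eps) + \order{\eps^{m+1}} = F(z,\eps) + \order{\eps^{m+1}},
$$
where the middle equality is precisely the inductively established invariance modulo $\eps^{m+1}$.

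The base case $j=0$ is the conjugacy equation~\eqref{eq:conjugacy}, which holds by Assumption~\ref{ass:unperturbed} with $r_0(x,\phi) = (0,\omega(x))^\top$. For the inductive step I would assume $(\Gamma_l, r_l)$ are known for $l < j$, form the known inhomogeneity $\mathcal{G}_j(x,\phi)$ by collecting the $\eps^j$ terms in the Taylor expansions of $F(\Gamma,\eps)$ and $D\Gamma\,r$, and then execute Steps~1--3 of Section~\ref{subsec:solnprocedure}: derive a $\phi$-independent slow correction $r_j(x) = K(x)^{-1} b(x)$ from the solvability condition~\eqref{eq:solvability_condition}; solve the normal projection~\eqref{eq:normal_ode_final} by variation of parameters for the unique $2\pi$-periodic $\Gamma_j^N$; and fix the tangential part $\Gamma_j^M$ uniquely by imposing the orthogonality condition~\eqref{eq:gauge_condition}.

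The main obstacle is verifying that each of the three steps is well-posed at every order, and this is where Assumption~\ref{ass:hyperbolicity} enters decisively. Invertibility of the $(k+1)\times(k+1)$ matrix $K(x)$ in Step~1 follows because $\ker(\mathcal{L}_0)$ is spanned by the tangent vectors along $\Mzero$ while $\ker(\mathcal{L}_0^*)$ is the $(k+1)$-dimensional dual space associated with the trivial Floquet multipliers; the pairing $\langle D\Gamma_0,\psi_i\rangle_\phi$ is non-degenerate precisely because the remaining $n-(k+1)$ Floquet multipliers are bounded away from $1$, so no further zero modes can ``collide'' with the tangent modes. For Step~2, the same spectral separation guarantees that the normal monodromy $M_N(x,\phi_0)$ has no unit eigenvalues, so $\mathbb{I}_n - M_N$ is invertible and the variation-of-parameters formula yields a unique periodic $\Gamma_j^N$ with uniform bounds in $x$. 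Step~3 reduces to an algebraic adjustment within $\ker(\mathcal{L}_0)$ and introduces no additional obstruction. Smoothness of $(\Gamma_j, r_j)$ in $x$ is inherited at each stage from the smoothness of $F$, $\Gamma_0$, $\omega$, and the Floquet-constructed projectors.

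Closing the induction at $j=m$ produces the stated series for $\Gamma$, $r_x$ and $r_\phi$, and by construction the residual in the invariance equation~\eqref{eq:invariance-full} is $\order{\eps^{m+1}}$. The chain-rule identity from the opening paragraph then delivers $z' = F(z,\eps) + \order{\eps^{m+1}}$, completing the argument.
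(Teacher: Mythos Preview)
Your proposal is correct and aligns exactly with the paper's own treatment: the paper does not give a separate formal proof of Theorem~\ref{thm:skew_product} but instead points (in the remark immediately following the statement) to the constructive procedure of Section~\ref{subsec:solnprocedure} as furnishing the coefficients $r_{x,i}$, $r_{\phi,i}$, and $\Gamma_i$; your inductive organization of that procedure, together with the chain-rule closure, is precisely the intended argument.
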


\begin{remark}
The procedure in Section \ref{subsec:solnprocedure} constructs precisely the series coefficients $r_{x,i}(x)$, $r_{\phi,i}(x)$, and $\Gamma_i(x,\phi)$ that fulfill this theorem under the `maximally reduced flow' choice, Definition~\ref{def:max_reduced}, where $r_{x,i}$ and $r_{\phi,i}$ are independent of $\phi$.
\end{remark}
\noindent
As a consequence, we can describe the slow dynamics by a flow on a representative cross-section of the manifold $M_0$.
\begin{definition}[Section of the manifold]
 The set
 $$ S_{0} := \Gamma_{0}(U \times \{0\}) = \{ \Gamma_{0}(x,0) \mid x \in U \} \subset M_{0} $$
 denotes a {\em section} of the  manifold $M_{0}$. It is a $k$-dimensional submanifold of $\mathbb{R}^n$. This section intersects each periodic orbit $\gamma_0(x, \cdot)$ exactly once. 
For any $y = \Gamma_0(x,0) \in S_0$, we denote the periodic orbit passing through this point as 
$$\tilde{\gamma}_y := \{ \Gamma_0(x,\phi) \mid \phi \in \Sone \}\,.$$
\end{definition}
\begin{figure}[t]
    \centering
    \includegraphics[width=11.0cm]{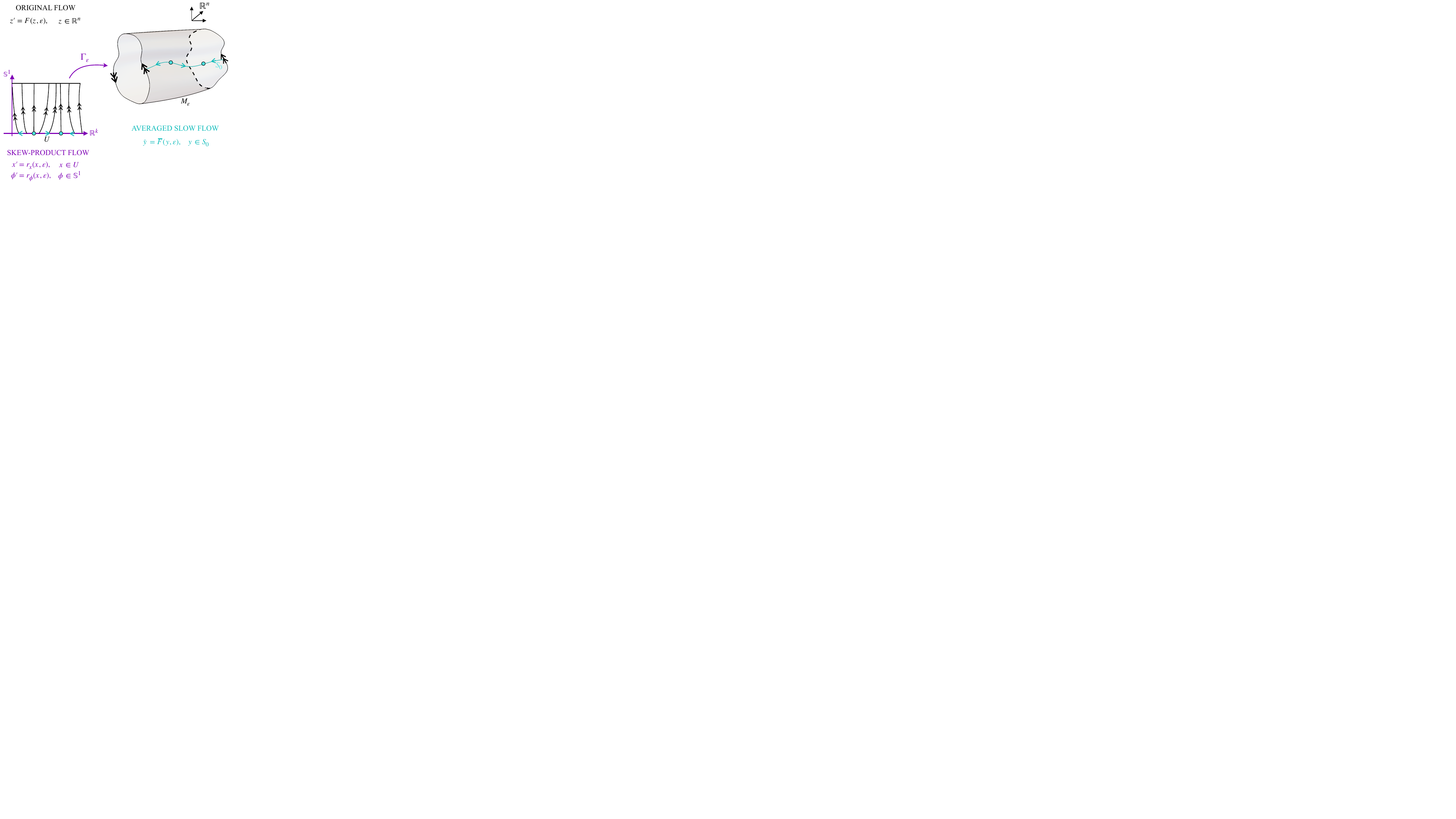}
    \caption{ A sketch of a normally hyperbolic manifold $M_{\varepsilon}$  embedded by $\Gamma_{\varepsilon}$. The slow drift along the unperturbed periodic orbits is governed by  the averaged slow flow along the unperturbed section $S_{0}$ and clearly visible in the local coordinate chart $U\times\mathbb{S}^1$. 
    }
    \label{fig:embed-averaging-new}
\end{figure}
\noindent  The next corollary allows us to interpret the slow drift on $M_\eps$ along the unperturbed periodic orbits in $M_0$ as an {\em averaged slow flow} on the section $S_{0}$, and provides the equations of motion governing this flow. The corollary forms a  higher-order and coordinate-independent generalisation of the classical Pontryagin-Rodygin slow-fast averaging theorem, see e.g., \cite{PontryaginRodygin1960}. We illustrate it schematically in Figure \ref{fig:embed-averaging-new}. 
\begin{corollary}[Averaged flow on the section]\label{cor:avg_flow}
   There exists a vector field $\overline{F}$ on the section $S_0$ of the form
   $$
   \overline{F}(y,\eps) = \sum_{j=1}^m \eps^j \overline{F}_j(y) + \mathcal{O}(\eps^{m+1})
   $$
   (i.e., $\overline{F}$ is tangent to $S_0$) with the following property: if $z(t)$ is a trajectory on the full manifold $M_\eps$ and $y(t)$ is an integral curve of the averaged system $y'=\overline{F}(y,\eps)$ with $\|z(0) - y(0)\| = \mathcal{O}(\eps)$, then
   $$
   \mathrm{dist}(z(t), \tilde{\gamma}_{y(t)}) = \mathcal{O}(\varepsilon) \quad \text{for} \quad |t| \leq \frac{C}{\varepsilon}.
   $$
\end{corollary}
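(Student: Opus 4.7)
The plan is to construct $\overline{F}$ directly from the skew-product representation delivered by Theorem~\ref{thm:skew_product}, and then transfer trajectory bounds from the $(x,\phi)$-chart back to the embedded section $S_0$. For $y \in S_0$ there is a unique $x \in U$ with $y = \Gamma_0(x,0)$, and I set
$$
\overline{F}(y,\eps) := D_x\Gamma_0(x,0)\,r_x(x,\eps) = \sum_{j=1}^m \eps^j\, D_x\Gamma_0(x,0)\,r_{x,j}(x).
$$
Smoothness of $\Gamma_0$ together with the fact that $\Gamma_0(\cdot,0)$ embeds $U$ onto $S_0$ with $T_yS_0 = \text{Im}(D_x\Gamma_0(x,0))$ make $\overline{F}$ a smooth vector field tangent to $S_0$ with the claimed expansion. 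If $y(t) = \Gamma_0(\tilde x(t),0)$ solves $y'=\overline{F}(y,\eps)$, injectivity of $D_x\Gamma_0(\tilde x,0)$ forces $\tilde x(t)$ to satisfy the slow subsystem $\tilde x' = r_x(\tilde x,\eps)$ of \eqref{eq:skew_product}.

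Next I would parametrise $z(t)$ on $M_\eps$ via the skew-product chart. By Theorem~\ref{thm:skew_product} the image $\Gamma(U\times\mathbb{S}^1,\eps)$ is $\mathcal{O}(\eps^{m+1})$-quasi-invariant for \eqref{eq:intro_system}, and Theorem~\ref{thm:Fenichel} supplies a genuinely invariant $M_\eps$ within $\mathcal{O}(\eps^{m+1})$ of it; hence $z(t) = \Gamma(x(t),\phi(t),\eps) + \mathcal{O}(\eps^{m+1})$ where $(x(t),\phi(t))$ obeys \eqref{eq:skew_product} up to a residual of the same order. The hypothesis $\|z(0)-y(0)\|=\mathcal{O}(\eps)$ together with $\Gamma = \Gamma_0 + \mathcal{O}(\eps)$ gives $\|\Gamma_0(x(0),\phi(0)) - \Gamma_0(\tilde x(0),0)\| = \mathcal{O}(\eps)$, and since $\Gamma_0$ is a local diffeomorphism this forces $\|x(0)-\tilde x(0)\| = \mathcal{O}(\eps)$ (and $\phi(0) = \mathcal{O}(\eps) \mod 2\pi$).

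Then I would compare $x(t)$ with $\tilde x(t)$ on the slow time scale $\tau = \eps t$. Both satisfy the rescaled equation $dw/d\tau = r_{x,1}(w) + \mathcal{O}(\eps)$, with $x(\tau/\eps)$ carrying an additional $\mathcal{O}(\eps^m)$ forcing from the $\mathcal{O}(\eps^{m+1})$ residual divided by $\eps$. A Gronwall estimate on the compact slow-time interval $\tau \in [0,C]$ keeps $\|x(t) - \tilde x(t)\| = \mathcal{O}(\eps)$ for $|t| \leq C/\eps$. Uniform smoothness of $\Gamma_0$ in $x$ upgrades this to Hausdorff closeness of the fibres $\tilde\gamma_{x(t)}$ and $\tilde\gamma_{\tilde x(t)} = \tilde\gamma_{y(t)}$; combining with $z(t) = \Gamma_0(x(t),\phi(t)) + \mathcal{O}(\eps)$, which already places $z(t)$ within $\mathcal{O}(\eps)$ of the point $\Gamma_0(x(t),\phi(t)) \in \tilde\gamma_{x(t)}$, the triangle inequality yields $\mathrm{dist}(z(t),\tilde\gamma_{y(t)}) = \mathcal{O}(\eps)$.

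The main obstacle is precisely the length of the time interval: naive Gronwall over $|t|\leq C/\eps$ would produce an exponentially growing constant, and the estimate closes only because $r_x$ is itself $\mathcal{O}(\eps)$, which converts the long-time problem into an $\mathcal{O}(1)$-time problem after rescaling to $\tau = \eps t$. A secondary care point is to apply Theorem~\ref{thm:Fenichel} with enough smoothness and on a compact neighbourhood of interest in $U$ so that the remainder $\mathcal{O}(\eps^{m+1})$ and the Lipschitz constants in Gronwall are uniform in $\eps$; this is where Assumptions~\ref{ass:unperturbed} and \ref{ass:hyperbolicity}, in particular uniform boundedness of $\omega(x)$ and of the normal hyperbolicity exponent $\lambda$, are essential.
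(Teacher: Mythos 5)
Your proposal follows essentially the same route as the paper's proof sketch: the same pushforward definition $\overline{F}(y,\eps)=D_x\Gamma_0(x,0)\,r_x(x,\eps)$, a Gronwall comparison in the $(x,\phi)$-chart between the exact slow dynamics on $M_\eps$ and the truncated skew-product flow, and a triangle inequality combining the $\mathcal{O}(\eps)$-closeness of $M_\eps$ to $M_0$ with the Lipschitz continuity of $\Gamma_0$. Your additional remarks (rescaling to slow time $\tau=\eps t$ so the Gronwall constant stays bounded, and matching initial conditions via local invertibility of $\Gamma_0$) merely make explicit what the paper's sketch leaves implicit, so the argument is correct and not a genuinely different approach.
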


\begin{proof}[Proof Sketch]
The averaged vector field $\overline{F}$ on $S_0$ is defined by pushing forward the slow vector field $r_x(x,\eps)$ from the parameter domain $U$ onto $S_0$ via the embedding $y = \Gamma_0(x,0)$:
$$ \overline{F}(y,\eps) := D_x\Gamma_0(x,0) \cdot r_x(x,\eps) \quad \text{for } y = \Gamma_0(x,0)\,. $$
Let $z(t) = \Gamma(\xi(t), \theta(t), \eps)$ be an exact trajectory on $M_\eps$, where $(\xi(t), \theta(t))$ solve the true dynamics on the manifold. Let $(x(t), \phi(t))$ solve the truncated skew-product system \eqref{eq:skew_product}. The difference between the ODEs for $\xi(t)$ and $x(t)$ is $\mathcal{O}(\eps^{m+1})$. Assuming nearby initial conditions, $\| \xi(0) - x(0) \| = \mathcal{O}(\eps)$, Gronwall's Lemma implies that
$$ \| \xi(t) - x(t) \| = \mathcal{O}(\eps) \quad \text{for } |t| \leq \frac{C}{\eps}\,. $$
Let $y(t) = \Gamma_0(x(t),0)$ be the solution on the section. The distance can be estimated by the triangle inequality:
\begin{align*}
    \mathrm{dist}(z(t), \tilde{\Gamma}_{y(t)}) &\leq \| \Gamma(\xi(t),\theta(t),\eps) - \Gamma_0(x(t),\theta(t)) \| \\
    &\leq \| \Gamma(\xi(t),\theta(t),\eps) - \Gamma_0(\xi(t),\theta(t)) \| + \| \Gamma_0(\xi(t),\theta(t)) - \Gamma_0(x(t),\theta(t)) \| \\
    &\leq \mathcal{O}(\eps) + L \| \xi(t) - x(t) \| = \mathcal{O}(\eps)\,,
\end{align*}
where the first term is $\mathcal{O}(\eps)$ because $M_\eps$ is $\mathcal{O}(\eps)$-close to $M_0$, and the second term is bounded by the Lipschitz constant $L$ of $\Gamma_0$ and the $\mathcal{O}(\varepsilon)$ estimate for $\| \xi(t) - x(t) \|$ derived from Gronwall's Lemma.
\end{proof}

\subsection{Relationship to averaging theory}

The parametrisation method provides a rigorous, geometric foundation for the results of classical averaging theory, such as the Pontryagin-Rodygin theorem \cite{PontryaginRodygin1960}. The derived maximally reduced flow $x' = r_x(x,\epsilon)$ is not merely a formal average but represents the true, slow dynamics on the persistent invariant manifold $M_\epsilon$.

\paragraph{Averaging on the correct manifold.}
Classical averaging theorems typically prescribe computing the average of the perturbation vector field along the known, unperturbed orbits on $M_0$. This provides a first-order approximation to the slow dynamics, and the leading-order term of our reduced flow, $r_1(x)$, recovers this classical result. The parametrisation method is more powerful because it systematically accounts for the deformation of the manifold itself. 
Thus, the method computes the average dynamics on the perturbed manifold $M_\epsilon$, leading to a more accurate and rigorously justified result beyond the leading order.

\paragraph{Accuracy and timescale of validity.}
The key advantage of this geometric approach lies in the improved \textit{accuracy} of the reduced model. The shadowing result holds on the classical averaging timescale of $t \sim O(1/\varepsilon)$; more refined estimates can even show this validity extends to the slightly longer timescale of $t \sim O(|\ln\varepsilon|/\varepsilon)$. The parametrisation method, however, provides a better approximation within that timescale. If the reduced system is computed up to order $k$, the difference between its solution and the true projected trajectory is of order $O(\epsilon^{k+1})$. 
By including higher-order terms, we create a more accurate model. This allows for improved predictions of the slow drift, as well as the location and stability of emergent structures within the slow flow, such as its equilibria or periodic orbits.

\section{Analytical example: the bent ellipsoid} \label{sec:ellipsoid}
We now apply our techniques to a model system constructed to be analytically tractable. The governing singularly perturbed system (in general form) is
\[ \frac{dz}{dt} = F_0(z) + \eps F_1(z). \]
To ensure we can compute everything analytically, we build the unperturbed vector field $F_0(z)$ with a specific geometric structure in mind.
%
First, we define the manifold of periodic orbits $M_0$ as the zero level-set of a function:
\[ g_0(z) := z_1^2 + z_2^2 +(z_3+\rho z_1^2)^2 - a^2 = 0\,. \]
Second, we define the corresponding flow \textit{on} this manifold, i.e., we need a vector field $T_0(z)$ that is tangent to $M_0$, $Dg_0(z) T_0(z)=0$. We choose:
\[ T_0(z) := \begin{bmatrix} -z_2 \\ z_1 \\ 2\rho z_1 z_2 \end{bmatrix}. \]
Finally, we define the dynamics \textit{off} the manifold. We add a component that vanishes on $M_0$ (via the $g_0(z)$ term) and directs the flow towards it. This normal component is given by $N_0(z) g_0(z)$, where:
$$
N_0(z) := -\frac{a}{2} \left(\frac{a + \sqrt{a^2 - (z_3 + \rho z_1^2)^2}}{z_3 + \rho z_1^2}\right) \begin{bmatrix} 0 \\ 0 \\ 1 \end{bmatrix}.
$$
Combining these pieces gives the full unperturbed vector field:
$$F_0(z) = T_0(z) + N_0(z) g_0(z)=
\begin{bmatrix}
    -z_2 \\
    z_1 \\
    2\rho z_1 z_2 - \frac{a}{2} \left(\frac{a + \sqrt{a^2 - (z_3 + \rho z_1^2)^2}}{z_3 + \rho z_1^2}\right) \left( z_1^2 + z_2^2 +(z_3+\rho z_1^2)^2 - a^2 \right)
\end{bmatrix}\,.$$
The perturbation term is chosen here as 
$$F_1(z) = 
\begin{bmatrix}
    0 \\ \frac{1}{a^2} \left( z_3 + \rho z_1^2 \right)^2 \left( \frac{1}{16}a^2 z_2 - z_1^2 z_2 \right)\\ 0
\end{bmatrix}
\,.$$

\begin{remark}[Domain of validity and smoothness]
The square root in the definition of $N_0(z)$ restricts the domain where the vector field is well-defined to the region $|z_3 + \rho z_1^2| \le a$. Furthermore, the vector field $F_0(z)$ is not smooth on the boundary of this domain, where $|z_3 + \rho z_1^2| = a$. As we will see, these boundary surfaces correspond to the poles of the manifold of periodic orbits.
\end{remark}

\begin{remark}
   To simplify the presentation, we set the initial time to be $t_0 = 0$ and the initial phase to be $\phi_0 = 0$. Additionally, we will use the shorthand notation $s_u := \sin u, c_u := \cos u$. 
\end{remark}

\noindent
The layer problem $z'=F_0(z)$, restricted to the manifold $M_0 = \{z \in \R^3 \mid g_0(z)=0\}$ is governed by the tangential vector field, 
$$z' = T_0(z)\,.$$ 
This flow consists of a 1-parameter family of periodic orbits. This manifold can be parametrised by
\begin{equation}
    M_0 = \left\{ z=\Gamma_0(x,\phi) = \begin{bmatrix} a s_x c_\phi \\ a s_x s_\phi \\ a c_x - \rho a^2 s^2_x \, c^2_{\phi}  \end{bmatrix}  : (x,\phi) \in (0,\pi) \times \mathbb R/2\pi \mathbb Z \right\},
\end{equation}
where $(x,\phi)$ is the parametrisation chart. For a fixed $x\in (0,\pi)$, the curve $\Gamma_0(x,\phi)$ is a periodic orbit with frequency $\omega(x)=1$, and thus period $\tau(x)=2\pi$.
The manifold is shown in Fig.~\ref{fig:ellipsoid}.

\begin{figure}
    \centering
    \includegraphics[width=9cm]{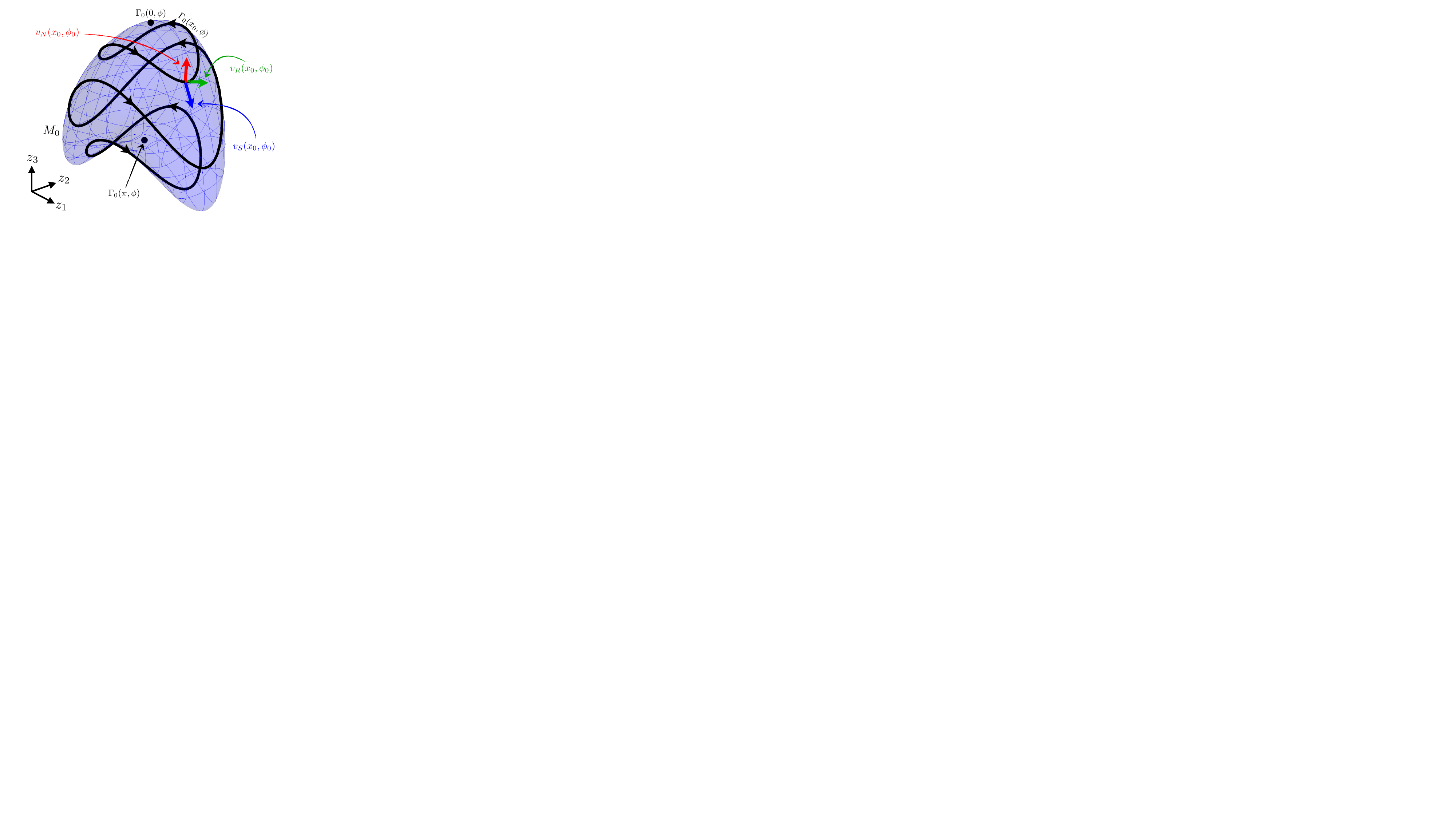}
    \caption{The bent ellipsoid manifold $M_0$ of periodic orbits for $a=1$ and $\rho = \tfrac{5}{3}$. Representative periodic orbits (black curves) are shown for three different values of $x$. The rotational, slow, and normal vectors $v_R(x_0,\phi_0), v_S(x_0,\phi_0)$ and $v_N(x_0,\phi_0)$ are shown in green, blue, and red, respectively at a fixed phase $\phi_0$. The vectors $v_R(x_0,\phi_0)$ and $v_S(x_0,\phi_0)$ span the tangent plane to the manifold $M_0$ at the point $\Gamma(x_0,\phi_0)$. The Jacobian $D\Gamma_0$ is rank deficient at the poles, i.e., at $x=0$ and $x=\pi$ (black markers), where the periodic orbits become fixed points.}
    \label{fig:ellipsoid}
\end{figure}

\begin{remark}[Singularity at the poles]
The domain for the slow variable is restricted to $x \in (0,\pi)$ to exclude the poles at $x=0$ and $x=\pi$. At these poles, the periodic orbits collapse to fixed points, causing the Jacobian of the parametrisation $D\Gamma_0$ to lose rank (i.e., the parametrisation is singular). Our analysis is therefore confined to the non-degenerate part of the manifold.
\end{remark}

\subsection{Bundle splitting and Floquet normal form}


\subsubsection{Basis of the tangent and normal bundles}
The basis vectors for the tangent and normal bundles are computed from the parametrisation $\Gamma_0(x,\phi)$.
\begin{itemize}
    \item \textbf{Rotational/Phase-drift vector $v_R$:} This vector is tangent to the periodic orbits.
    \[ v_R(x,\phi) = D_\phi \Gamma_0(x,\phi) = \begin{bmatrix} -a s_x \, s_\phi \\  a s_x \, c_\phi \\ \rho a^2 s^2_x \,  s_{2 \phi} \end{bmatrix}. \]
    A direct calculation confirms that $v_R(x,\phi) = T_0(\Gamma_0(x,\phi))$, consistent with the construction.

    \item \textbf{Slow/Phase-preserving vector $v_S$:} Since $\omega(x)=1$ is constant, the phase correction term is zero.
    \[ v_S(x,\phi) = D_x\Gamma_0(x,\phi) = \begin{bmatrix} a c_x \, c_\phi \\ a c_x \, s_\phi \\ -a s_x- \rho a^2 s_{2x}\, c^2_\phi \end{bmatrix}. \]

    \begin{remark}[On analytical tractability]
The choice of a constant frequency, $\omega(x)=1$, is a deliberate simplification crucial for keeping this example analytically solvable. A non-constant $\omega(x)$ would introduce a non-trivial phase correction term in the definition of $v_S$. This would create a cascade of complexity, making the analytical inversion of the matrix $P(x,t)$ and the evaluation of the subsequent integrals practically intractable. In Section~\ref{sec:mlt}, we will relax this condition and analyse a system with a non-trivial phase drift using numerical methods.
\end{remark}

    \item \textbf{Normal vector $v_N$:} This vector is defined by evaluating the normal component of the vector field on the manifold.
    \[ v_N(x,\phi) = N_0(\Gamma_0(x,\phi)) = \begin{bmatrix} 0 \\ 0 \\ \tfrac{1}{2} a \mu(x) \sec x \end{bmatrix}, \]
    where $\mu(x) = -(1+\sin x)$.
\end{itemize}

\subsubsection{Floquet normal form}
The transition matrix has the Floquet form $\Phi(x,t) = P(x,t) e^{B(x)t}$, where we use $t=\phi$ since $\omega=1$. The periodic matrix $P(x,t)$ is formed by the basis vectors:
\[ P(x,t) = \begin{bmatrix} v_R(x,t) & v_S(x,t) & v_N(x,t) \end{bmatrix}. \]
By construction of the vector field $F_0(z)$, the constant matrix $B(x)$ is diagonal in this basis:
\[ B(x) = P^{-1}(x,t) \left( DF_0 (\Gamma_0(x,t)) P(x,t) - D_t P(x,t) \right) = \begin{bmatrix} 0 & 0 & 0 \\ 0 & 0 & 0 \\ 0 & 0 & \mu(x)  \end{bmatrix}. \]
The two zero eigenvalues correspond to the neutral tangential directions, and $\mu(x) = -(1+\sin x)$ is the non-trivial Floquet exponent. Since $\mu(x) < 0$ for $x \in (0,\pi)$, the manifold $M_0$ is normally hyperbolic and attracting.

\subsection{Projection maps}
By construction, the columns of the matrix $P(x,t,0)$ form a basis for the ambient space $\mathbb R^3$.  
We can then compute a basis for the dual space (of the tangent space) as the rows of the inverse matrix, $P^{-1}(x,t,0)$. 
Using these bases, we construct the oblique projection maps as follows. \medskip

\subsubsection{Oblique projection onto the rotational bundle $\boldsymbol{TR_0}$}
The projection map onto the rotational bundle is given by the outer product of $v_R(x,t)$ and $w_R(x,t)$, where $w_R(x,t)$ is the first row of $P^{-1}(x,t,0)$:
\[ \Pi_R(x,t) = v_R(x,t) w_R(x,t) =
\begin{bmatrix}  
	\st^2  &  -\st \ct  &  0  \\ 
	-\st \ct  &  \ct^2  & 0  \\ 
	-2a \rho \ct \st^2 \sx  &  2a \rho \ct^2 \st \sx & 0
\end{bmatrix}.
\]
%

\subsubsection{Oblique projection onto the slow bundle $\boldsymbol{TS_0}$}
The projection map onto the phase-preserving slow bundle is the outer product of $v_S(x,t)$ and $w_S(x,t)$, where $w_S(x,t)$ is the second row of $P^{-1}(x,t,0)$:
\[ \Pi_S(x,t) = v_S(x,t) w_S(x,t) =
\begin{bmatrix}
	\ct^2  &  \st \ct  &  0 \\ 
	\st \ct  & \st^2  & 0  \\
	-\ct (1+2a \rho \cx \ct^2) \tx  &  -(1+2a \rho \cx \ct^2) \st \tx & 0 
\end{bmatrix}.
\]
%

\subsubsection{Oblique projection onto the normal bundle $\boldsymbol{\mathcal N}$}
The projection map onto the normal bundle is the outer product of $v_N(x,t)$ and $w_N(x,t)$, where $w_N(x,t)$ is the third row of $P^{-1}(x,t,0)$:
\[ \Pi_N(x,t) = v_N(x,t) w_N(x,t) = 
\begin{bmatrix}  
	0  &  0  &  0 \\
	0  &  0  &  0 \\
	(2a \rho \sx + \tx) \ct & \tx \st & 1
\end{bmatrix}.
\]
%

\subsection{Homological equation \& solution procedure}
The homological equation for the first-order correction to the manifold is
\[ \mathcal{L}_0 [\Gamma_1(x,\phi)] +D\Gamma_0(x,\phi) \, r_1 = F_1(\Gamma_0(x,\phi)) \]
where $\mathcal{L}_0$ is the linear operator $\mathcal{L}_0[\cdot] = \omega(x) D_{\phi} [\cdot] - DF_0(\Gamma_0) [\cdot ]$. The corresponding adjoint operator is 
\[ \mathcal{L}_0^*[\cdot] = - \omega(x) D_{\phi} [\cdot] -DF_0(\Gamma_0)^T \, [\cdot] \]
and the adjoint variational equation is $\mathcal{L}_0^*[\psi] = 0$.  

\subsubsection{Null space of the adjoint operator}
Given the transition matrix, $\Phi(x,t,0)$, of the primal variational equation, $D_t \Phi = DF_0(\Gamma_0) \, \Phi$, the transition matrix, $\Psi(x,t,0)$, of the adjoint variational equation is 
\[ \Psi(x,t,0) = (\Phi(x,t,0)^{-1})^T = \left( P(x,t,0)^{-1} \right)^T e^{-B(x)^T t}  P(x,0,0)^T.  \]
From this, we can immediately write down the associated periodic matrix
\begin{align*} 
\begin{bmatrix} \psi_1(x,t) & \psi_2(x,t) & \psi_3(x,t) \end{bmatrix} = 
\left(P(x,t,0)^{-1} \right)^T &= \frac{1}{a} \begin{bmatrix} 
  -\st/\sx  & \ct / \cx  & 2 \sx(1+2 a \rho \cx) \ct/\mu(x) \\ 
  \ct/\sx & \st/\cx & 2\sx \st / \mu(x) \\ 
  0 & 0 & 2\cx / \mu(x)
\end{bmatrix}.
\end{align*}
Thus, a periodic basis for the null space of the adjoint operator is $\left\{ \psi_1(x,t),\psi_2(x,t) \right\}$. 

We now follow the 3-step process of Section~\ref{subsec:solnprocedure} to construct a solution of the homological equation that achieves a maximally reduced flow. 
\medskip

\subsubsection{Step 1: Averaged dynamics}
By applying the Fredholm alternative to our homological equation, we have the constraint
\[ \begin{bmatrix} \left( \frac{1}{2\pi} \int_0^{2\pi} D\Gamma_0(x,\phi)^T  \psi_1(x,\phi) \, d\phi \right)^T \\ \left( \frac{1}{2\pi} \int_0^{2\pi} D\Gamma_0(x,\phi)^T  \psi_2(x,\phi) \, d\phi \right)^T \end{bmatrix}  r_1(x) = \begin{bmatrix} \frac{1}{2\pi} \int_0^{2\pi}  F_1(\Gamma_0(x,\phi))^T  \psi_1(x,\phi) \, d\phi \\ \frac{1}{2\pi} \int_0^{2\pi}  F_1(\Gamma_0(x,\phi))^T  \psi_2(x,\phi) \, d\phi \end{bmatrix}. \]
Evaluating the integrals, the equation for the correction to the vector field simplifies to
\[ \begin{bmatrix}  0 & 1 \\ 1 & 0 \end{bmatrix} r_1(x) = \begin{bmatrix} 0 \\ \tfrac{1}{64} a^2 (s_{4x}-s_{2x}) \end{bmatrix}. \]
Hence, the first-order correction to the maximally reduced vector field is
\[ r_1(x) = \begin{bmatrix} \tfrac{1}{64} a^2 (s_{4x}-s_{2x}) \\ 0  \end{bmatrix}. \]
%

\subsubsection{Step 2: Normal correction}
By projecting the homological equation onto the normal bundle, we obtain the first-order, linear, inhomogeneous ODE
\[ \Pi_N \mathcal{L}_0 [\Gamma_1^N] = \Pi_N (x,\phi) F_1(\Gamma_0(x,\phi)) \]
where $\Gamma_1^N(x,\phi) = \Pi_N (x,\phi) \Gamma_1(x,\phi)$ is the projection of $\Gamma_1(x,\phi)$ onto the normal bundle. 
From the variation of parameters formula given in Section~\ref{subsec:solnprocedure}, the normal correction is given by 
\[ \Gamma_1^N(x,\phi) = \Phi_N(x,\phi,0) \Gamma_1^N(x,0) + \frac{1}{\omega(x)} \int_{0}^{\phi} \Phi_N(x,\phi,\sigma) \Pi_N(x,\sigma) F_1(\Gamma_0(x,\sigma))\,d\sigma,  \]
where $\Phi_N(x,\phi,0) = \Pi_N(x,t,0) \Phi(x,t,0) \Pi_N(x,0,0)$ is the projection of the transition matrix onto the normal bundle and is given by
\[ \Phi_N(x,\phi,0) = \begin{bmatrix} 
  0 & 0 & 0 \\
  0 & 0 & 0 \\ 
  e^{\mu(x) \phi} (2a \rho \sx + \tx) & 0 & e^{\mu(x) \phi}
\end{bmatrix}. \]
Enforcing $2\pi$-periodicity in $\phi$ uniquely determines the initial condition as 
\[ \Gamma_1^N(x,0) = \left( \mathbb I_3 -M_N(x)  \right)^{-1} \int_0^{2\pi} \Phi_N(x,2\pi,\sigma) \Pi_N(x,\sigma) F_1(\Gamma_0(x,\sigma))\, d\sigma  \]
where 
\[ M_N(x) = \begin{bmatrix}  
0 & 0 & 0 \\ 
0 & 0 & 0 \\
\tfrac{1}{4} a^2 e^{2\pi \mu(x)} \mu(x)^2 (2a\rho \sx + \tx) /\cx^2 & 0 & \tfrac{1}{4} a^2 e^{2\pi \mu(x)} \mu(x)^2/\cx^2
\end{bmatrix} \]
is the projection of the monodromy onto the normal bundle and 
\[ \Pi_N(x,\phi) F_1(\Gamma_0(x,\phi)) =  \begin{bmatrix} 
0 \\ 0 \\ \frac{1}{16} a^3 \cx \sx^2 \sphi^2 \left(1 - 16 \sx^2 \cp^2 \right) 
\end{bmatrix}. \]
Substituting into the solution formula and simplifying, the normal correction is  
\[ \Gamma_1^N(x,\phi) = 
\frac{a^3 \sx^2}{8\mu(4+\mu^2)(16+\mu^2)}
\begin{bmatrix} 0 \\ 0 \\ h_0 + h_1\, \mu + h_2\, \mu^2 + h_3\, \mu^3 + h_4 \, \mu^4 \end{bmatrix}, \]
where the coefficient functions are
\begin{equation*} 
  \begin{split}
      h_0 = -32+64c_{2x}, \quad 
      h_1 =& 16 (s_{2\phi}-2\sx^2 s_{4\phi}), \quad 
      h_2 = -10+20 c_{2x}-8 c_{2\phi} +4 c_{4\phi} -4 c_{2x}c_{4\phi}, \\ 
      h_3 &= s_{2\phi}-8\sx^2 s_{4\phi}, \quad {\rm and } \quad 
      h_4 = \sphi^2 - 4\sx^2 s_{2\phi}^2, 
  \end{split}
\end{equation*}
and we have suppressed the $x$-dependence of $\mu$.

\subsubsection{Step 3: Tangential correction}

Having determined the maximally reduced vector field, the homological equation is now fully determined and is given by the first-order, linear, inhomogeneous ODE
\[ \mathcal{L}_0[ \Gamma_1] = F_1(\Gamma_0) - D \Gamma_0 r_1.  \]

To find the unique periodic solution, our approach combines the final two steps of the procedure from Section~\ref{subsec:solnprocedure}, i.e., we will solve for the full correction $\Gamma_1$ directly using the variation of parameters formula, applying both the periodicity requirement and the orthogonality condition from Equation~\eqref{eq:gauge_condition} simultaneously to determine the initial condition $\Gamma_1(x,0)$.

\noindent
Solving by variation of parameters, we have
\[ \Gamma_1(x,\phi) = \Phi(x,\phi,0) \Gamma_1(x,0) + J(x,\phi), \]
where 
\begin{equation*}
\begin{split} 
J(x,\phi) &:= \frac{1}{\omega(x)} \int_0^{\phi} \Phi(x,\phi,\sigma) \left( F_1(\Gamma_0(x,\sigma)) - D \Gamma_0(x,\sigma) r_1(x) \right) \, d\sigma \\ 
&= \frac{a^3 \sx}{8} \begin{bmatrix} \frac{1}{4}\cx^2 \sphi (2-3c_{2x}-2\sx^2 c_{2\phi}) \\ -3 \sx^2 \cx^2 \cp \sphi^2 \\ 
J_3
\end{bmatrix} \, ,
\end{split}
\end{equation*}
and 
\[ J_3 = \tfrac{1}{2} s_{2\phi}+ \rho a (\sx^2s_{4\phi}+s_{2\phi}(3c_{2x}-2)) 
- \tfrac{2\sphi(2\cp+\sphi \mu)}{4+\mu^2} 
- \tfrac{\sx^2 \mu (\mu s_{4\phi}-8 s_{2\phi}^2)}{16+\mu^2}
+ \tfrac{4(e^{\mu \phi}-1) (32 c_{2x}-16+(8 c_{2x}-7)\mu^2)}{\mu(4+\mu^2)(16+\mu^2)}. \]
Enforcing $2\pi$-periodicity in $\phi$ leads to the underdetermined system
\[ \begin{bmatrix} 0 & 0 & 0 \\ 0 & 0 & 0 \\ -M_{31}(x) & 0 & 1-M_{33}(x) \end{bmatrix} \Gamma_1(x,0) = J(x,2\pi) = \begin{bmatrix} 0 \\ 0 \\ \frac{a^3 \cx \sx^2(e^{2\pi \mu}-1)(8 c_{2x} (4+\mu^2)-16-7\mu^2)}{8 \mu (4+\mu^2)(16+\mu^2)} \end{bmatrix} ,\]
where $M_{ij}(x)$ denotes the $(i,j)$-element of the monodromy matrix $M(x) = \Phi(x,\omega(x)^{-1} 2\pi,0)$.
In order to specify a unique solution, we append the condition that $\Gamma_1(x,\phi)$ must be orthogonal to the adjoint null space. 
Thus, the linear system that uniquely determines the initial conditions is
\[ 
\begin{bmatrix} 
    -M_{31}(x) & 0 & 1-M_{33}(x) \\ 
    \langle \Phi_1, \psi_1 \rangle & \langle \Phi_2, \psi_1 \rangle & \langle \Phi_3, \psi_1 \rangle \\
    \langle \Phi_1, \psi_2 \rangle & \langle \Phi_2, \psi_2 \rangle & \langle \Phi_3, \psi_2 \rangle 
\end{bmatrix}
\Gamma_1(x,0) = \begin{bmatrix} J_3(x,2\pi) \\ -\langle J(x,\phi), \psi_1 \rangle \\ -\langle J(x,\phi), \psi_2 \rangle \end{bmatrix} 
= 
\begin{bmatrix} \frac{a^3 \cx \sx^2(e^{2\pi \mu}-1)(8 c_{2x} (4+\mu^2)-16-7\mu^2)}{8 \mu (4+\mu^2)(16+\mu^2)} \\ \frac{1}{64} a^2 \cx^2(4-5 c_{2x}) \\ 0 \end{bmatrix},
\]
where $\Phi_j$ denotes the $j^{\rm th}$ column of $\Phi(x,\phi,0)$. Evaluating the integrals, the system simplifies to
\[ 
\begin{bmatrix} 
    -M_{31}(x) & 0 & 1-M_{33}(x) \\ 
    0 & \tfrac{1}{a \sx} & 0 \\
    \tfrac{1}{a \cx} & 0 & 0
\end{bmatrix}
\Gamma_1(x,0) 
= 
\begin{bmatrix} \frac{a^3 \cx \sx^2(e^{2\pi \mu}-1)(8 c_{2x} (4+\mu^2)-16-7\mu^2)}{8 \mu (4+\mu^2)(16+\mu^2)} \\ \frac{1}{64} a^2 \cx^2(4-5 c_{2x}) \\ 0 \end{bmatrix},
\]
and hence the initial condition is 
\[ \Gamma_1(x,0) = a^3 \sx \cx \begin{bmatrix} 0 \\ \tfrac{1}{64} \cx (4-5 c_{2x}) \\ \frac{\sx (16+7\mu^2-8 c_{2x} (4+\mu^2))}{8\mu (4+\mu^2)(16+\mu^2)} \end{bmatrix}. \]
Thus, the unique periodic correction, $\Gamma_1(x,\phi)$, which is orthogonal to the adjoint null space, is 
\[ \Gamma_1(x,\phi) = \frac{a^3 \sx}{64} \begin{bmatrix} 
-\cx^2 (c_{2x}+4 \sx^2 c_{2\phi}) \sphi \\ 
\cx^2 \cp (4-5 c_{2x}-24 \sx^2 \sphi^2) \\ 
s_{2x} \left( \ell_1 - \frac{\ell_2}{4+\mu^2} - \frac{\ell_3}{16+\mu^2} + \frac{\ell_4}{\mu(4+\mu^2)(16+\mu^2)}  \right)
 \end{bmatrix}, \]
where the coefficient functions are 
\begin{equation*}
  \begin{split}
    \ell_1 &= \tfrac{1}{2} s_{2\phi} \left( 1+\rho a \cx (c_{2x}+4 \sx^2 c_{2\phi}) \right), \quad
    \ell_2 = 2 \sphi(2\cp+\mu \sphi), \\
    \ell_3 &= \mu \sx^2(4 c_{4\phi}-4+\mu s_{4\phi}), \quad {\rm and } \quad 
    \ell_4 = 4(16+7\mu^2-8c_{2x}(4+\mu^2)).
  \end{split}
\end{equation*}
The perturbed manifold $\Gamma_0(x,\phi) + \eps\, \Gamma_1(x,\phi)$, and the averaged flow along it, is shown in Fig.~\ref{fig:perturbedcroissant}. 

\begin{figure}[th]
    \centering
    \includegraphics[width=5in]{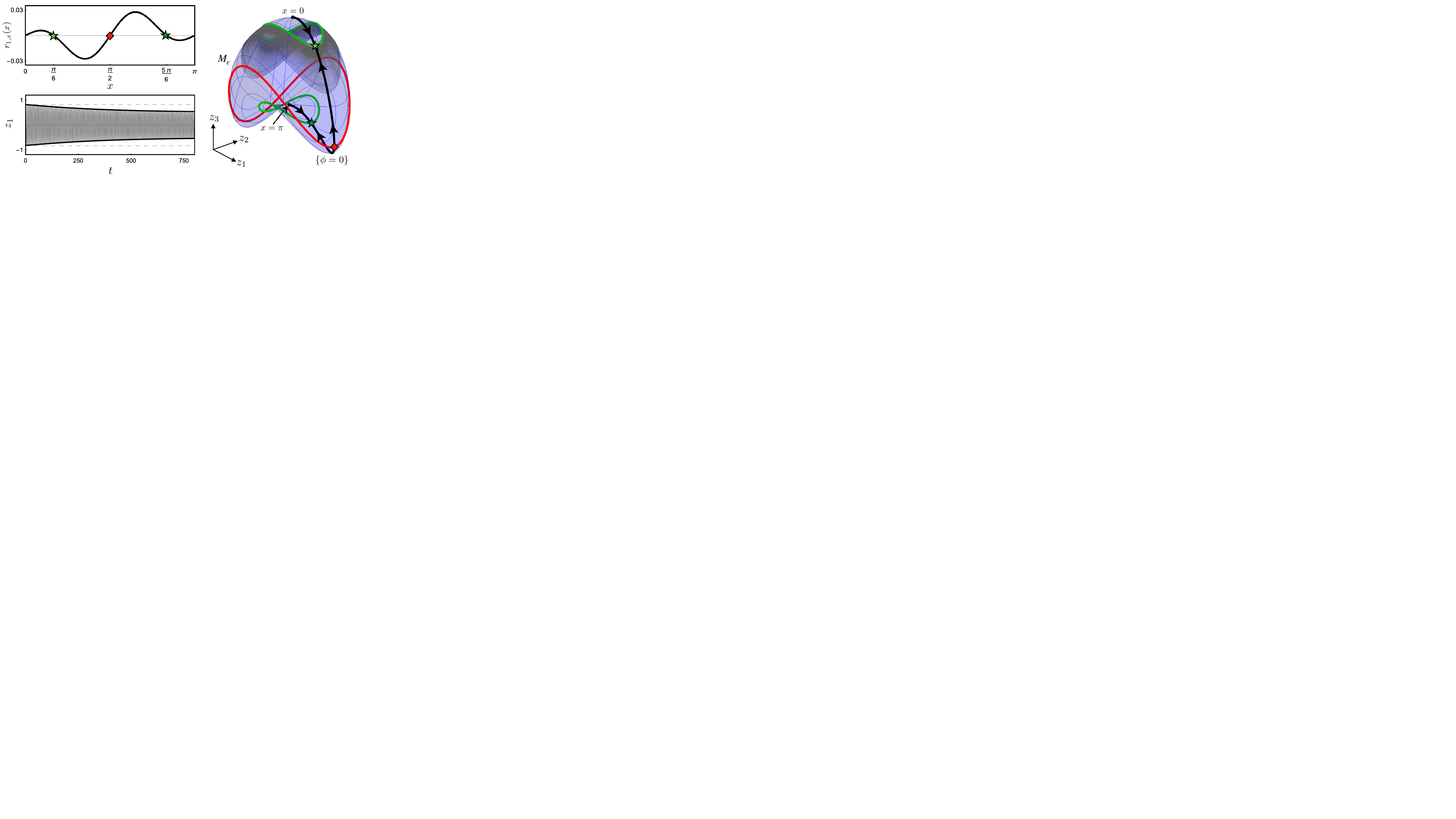}
    \put(-362,170){(a)}
    \put(-148,170){(b)}
    \put(-360,78){(c)}
    \caption{Averaged flow on the perturbed manifold for $a=1, \rho=\tfrac{5}{3}$, and $\eps = 0.05$. (a) The averaged slow vector field has stable fixed points at $x=\tfrac{\pi}{6}$ and $x=\tfrac{5\pi}{6}$ (green stars) and an unstable fixed point at $x=\tfrac{\pi}{2}$ (red diamond). (The equilibria at the poles are also unstable.) (b) Slow manifold $M_{\eps}$ computed using the first two terms, $\Gamma_0$ and $\Gamma_1$, of the embedding. For the chosen parameter values, we find that $\lVert \Gamma_1(x,\phi) \rVert \ll \lVert \Gamma_0(x,\phi) \rVert$ for all $(x,\phi) \in (0,\pi) \times [0,2\pi]$. Thus, the perturbed manifold $M_{\eps}$ appears very similar to the unperturbed manifold $M_0$. 
    The averaged flow (black arrows) is shown along the section $\{ \phi = 0 \}$ (black curve). The closed red and green curves are the periodic solutions corresponding to the fixed points in (a).
    A solution of the full system, $z^\prime(t) = F_0(z)+\eps F_1(z)$, with initial condition given by $z_1(0) = \Gamma_0(\tfrac{\pi}{3})+\begin{bmatrix} -0.1 & -0.3 & 0.5\end{bmatrix}^T$ is also shown (light gray curve). The solution rapidly contracts to $M_{\eps}$ and then slowly approaches the green stable limit cycle at $x=\tfrac{\pi}{6}$. (c) Time series of the $z_1$-component of the gray solution from (b). The horizontal dashed lines correspond to the initial amplitude and have been added to better illustrate the convergence of the solution to the attractor approximately given by $\Gamma_0(\tfrac{\pi}{6},\phi)+\eps \Gamma_1(\tfrac{\pi}{6},\phi)$.}
    \label{fig:perturbedcroissant}
\end{figure}

\noindent
To complete the calculation, the tangential component of the correction can now be computed as 
\[ \Gamma_1^M = \Pi_M \Gamma_1 = \Gamma_1 - \Gamma_1^N. \]
We find that it is given by
\[ \Gamma_1^M = \frac{a^3 \sx \cx}{64} 
\begin{bmatrix} 
  -\cx \sphi (c_{2x}+4 \sx^2 c_{2\phi}) \\ 
  \cx \cp (4-5 c_{2x}-24 \sx^2 \sphi^2) \\ 
  \tfrac{1}{2} \sx s_{2\phi} \left( 2-8 \sx^2 c_{2\phi} +\rho a (\cx + c_{3x}+8 \cx \sx^2 c_{2\phi}) \right)
\end{bmatrix}. \]
%

\section{Numerical example: Morris-Lecar-Terman model} \label{sec:mlt}
We now apply our methods to an extension of the Morris-Lecar model for neural excitability, known as the Morris-Lecar-Terman model \cite{Terman1991}, in which the constant applied current is replaced with a linear feedback control. The dimensionless Morris-Lecar-Terman model is given by 
\begin{equation} \label{eq:MLT}
  \begin{split}
	\dot v &= x - g_L(v-V_L)-g_K w(v-V_K)-g_{Ca} m_{\infty}(v) (v-V_{Ca}) \\ 
	\dot w &= \frac{w_{\infty}(v)-w}{\tau_w(v)} \\ 
	\dot x &= \eps \left( k-v \right)
  \end{split}
\end{equation}
where $v$ is the voltage, $w$ is the recovery variable, and $x$ is the externally applied current. The steady-state activation functions are given by
\[ m_{\infty}(v) = \frac{1}{2} \left( 1+\tanh \left( \frac{v-c_1}{c_2} \right) \right) \quad {\rm and } \quad w_{\infty}(v) = \frac{1}{2} \left( 1+\tanh \left( \frac{v-c_3}{c_4} \right) \right), \]
and the voltage-dependent time-scale function is 
\[ \tau_w(v) = \tau_0 \operatorname{sech} \left( \frac{v-c_3}{2c_4} \right). \]
Standard parameter values are listed in Table~\ref{tab:MLT}. The parameter $k$ is usually taken to be the principal bifurcation parameter. 
The Morris-Lecar-Terman model can exhibit a wide variety of oscillatory dynamics under variations in $k$, include spiking, fold/homoclinic bursting, and circle/fold-cycle bursting. 
Here, we fix $k=-0.12$ so that the slow drift along the manifold of periodics stays away from any bifurcations of the periodics.

\begin{table}[ht!]
   \centering
   \begin{tabular}{clclclcl} 
	\hline
	Param. & Value & Param. & Value & Param. & Value & Param. & Value\\ 
	\hline
	$g_L$ & $0.5$ & $g_K$ & $2$ & $g_{Ca}$ & $1.25$ & $V_L$ & $-0.5$  \\ 
	$V_K$ & $-0.7$ & $V_{Ca}$ & $1$ & $c_1$ & $-0.01$ & $c_2$ & $0.15$ \\ 
	$c_3$ & $0.1$  & $c_4$ & $0.16$ & $\tau_0$ & $3$ & $\eps$ & $0.01$ \\
	\hline
   \end{tabular}
   \caption{Standard parameter values for the Morris-Lecar-Terman model.}
   \label{tab:MLT}
\end{table}

\noindent
The Morris-Lecar-Terman model is a slow/fast system in standard form with
\[ z^{\prime} = F_0(z) + \eps F_1(z) = \begin{bmatrix} x - g_L(v-V_L)-g_K w(v-V_K)-g_{Ca} m_{\infty}(v) (v-V_{Ca}) \\ \frac{w_{\infty}(v)-w}{\tau_w(v)}  \\ 0 \end{bmatrix} + \eps \begin{bmatrix} 0 \\ 0 \\ k-v  \end{bmatrix}, \]
where $z = (v,w,x)^T$.

\subsection{Numerical formulation}	\label{sec:mlt_numerics}
To compute periodic solutions $\gamma(x,t)$ of the layer problem, together with solutions $\Phi(x,t,0)$ of the associated variational equation, we solved the following boundary value problem
\begin{equation} 	\label{eq:mlt_bvp}
  \begin{split}
	\frac{dz}{d\widetilde t} &= T \, F_0(z) \\ 
	\frac{\partial \Phi}{\partial \widetilde t} &= T\, DF_0(z)\, \Phi
  \end{split}
\end{equation}
where $T$ is the $x$-dependent integration time (i.e., the period), $\widetilde t = t/T \in [0,1]$ is the rescaled time, and the boundary conditions are
\begin{equation}	\label{eq:mlt_bcs}
  \begin{split}
	z(0) - z(1) &= 0 \\ 
	\Phi(x,0,0) &= \mathbb I_3.
  \end{split}
\end{equation}
The boundary conditions in \eqref{eq:mlt_bcs}(a) enforce periodicity of the solutions of the layer problem, and the boundary conditions in \eqref{eq:mlt_bcs}(b) guarantee that $\Phi(x,t,0)$ is the transition matrix. 

\begin{remark} \label{remark:PhiRow3}
Since the component of $F_0$ corresponding to the slow direction is trivial, the third row of $\Phi(x,t,0)$ is particularly simple, with
\[ \Phi_{31}(x,t,0) \equiv 0,  \quad \Phi_{32}(x,t,0) \equiv 0, \quad {\rm and} \quad \Phi_{33}(x,t,0) \equiv 1. \]
\end{remark}

To generate the manifold of periodics of the layer problem, we first computed a solution of \{\eqref{eq:mlt_bvp},\,\eqref{eq:mlt_bcs}\} for a fixed value of $x$ using an initial value solver. Then, we used the numerical continuation software package \textsc{Auto} \cite{auto07p} to numerically continue our starting solution in the parameters $x$ and $T$. 
To guarantee uniqueness of our numerically computed solutions, we impose the integral phase condition
\[ \int_0^1 \frac{dz_{\rm old}}{d\widetilde{t}} \cdot \left(z-z_{\rm old} \right)\,d\widetilde{t} = 0, \]
where $z_{\rm old}$ refers to the solution at the previous continuation step.
In this way, we were able to compute the manifolds of periodics, along with their families of transition matrices. The manifolds of periodics and representative elements of $\Phi(x,t,0)$ are shown in Fig.~\ref{fig:mltlayer}. 

\begin{figure}[ht!]
   \centering
   \includegraphics[width=5in]{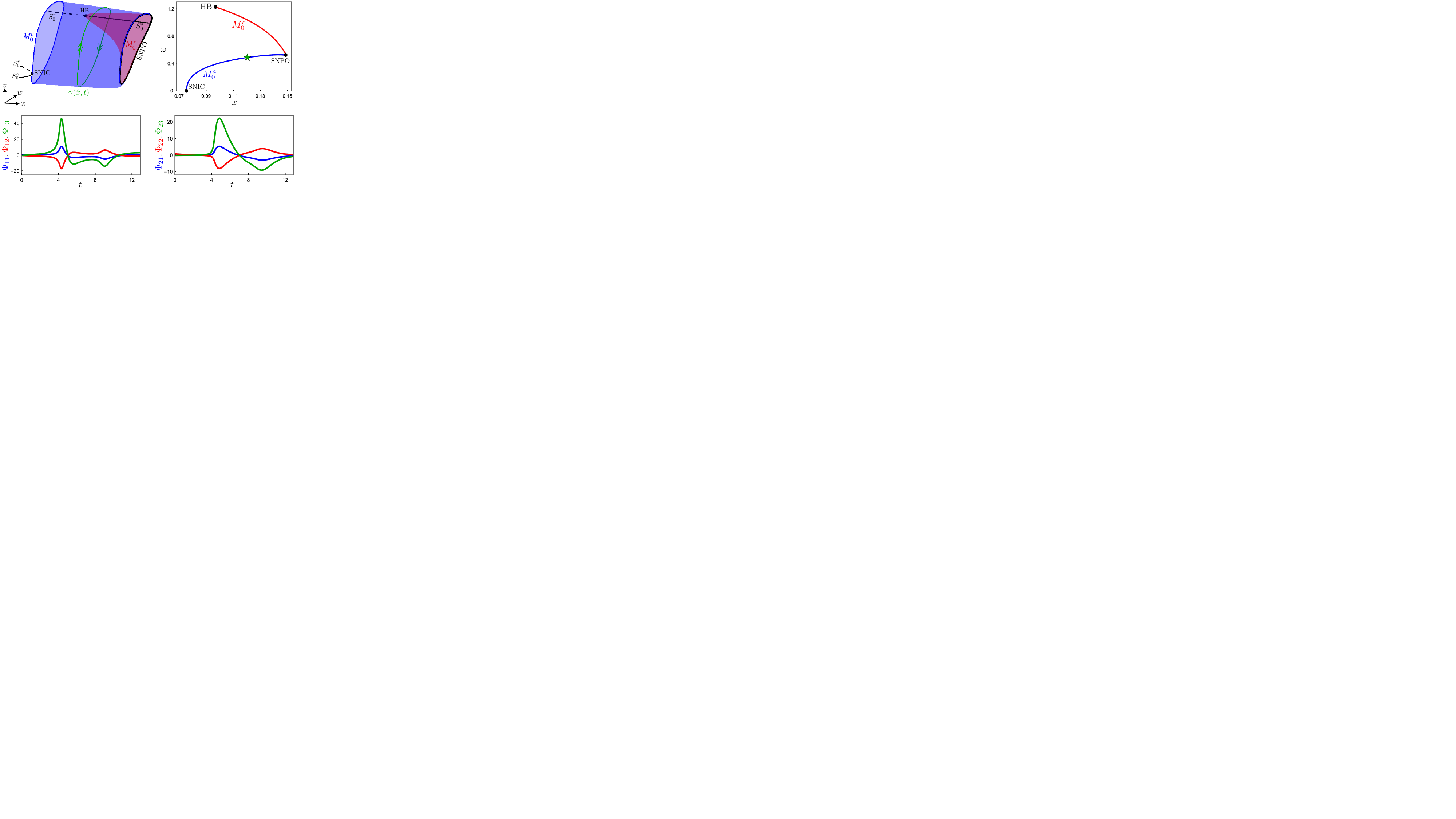}
   \put(-360,222){(a)}
   \put(-170,222){(b)}
   \put(-360,90){(c)}
   \put(-180,90){(d)}
   \caption{Properties of the layer problem of the Morris-Lecar-Terman model. 
   (a) The critical manifold ($S_0^a \cup S_0^r$; black solid and dashed curves) undergoes a subcritical Hopf bifurcation (HB) at $x_{\rm HB} \approx 0.0973$ and a saddle-node on invariant circle (SNIC) bifurcation at $x \approx 0.07544$. The (red) repelling manifold, $M_0^r$, of periodics emanates from the Hopf bifurcation and connects to the (blue) attracting manifold, $M_0^a$, of periodics at a saddle-node of periodic orbits (SNPO) at $x \approx 0.1493$. A representative normally hyperbolic limit cycle $\gamma(\hat{x},t)$ is highlighted in green for $\hat x = 0.12$. 
   (b) Frequency-current curves for $M_0^a$ and $M_0^r$. The attracting manifold is delimited by the SNIC bifurcation (where the frequency goes to zero) and the SNPO bifurcation (where there is loss of smoothness in the frequency map). The green star marker indicates the representative limit cycle $\gamma(\hat{x},t)$.
   (c) First row of the transition matrix $\Phi(\hat x,t,0)$ of the variational equation along $\gamma(\hat{x},t)$. 
   (d) Second row of $\Phi(\hat x,t,0)$.}
   \label{fig:mltlayer}
\end{figure}

The layer problem, $z^\prime = F_0(z)$, of the Morris-Lecar-Terman model possesses two families of limit cycles, $M_0^a$ and $M_0^r$ (blue and red surfaces, respectively, in Fig.~\ref{fig:mltlayer}(a)), each parametrized by the slow variable $x$. 
The repelling manifold $M_0^r$ emanates from the subcritical Hopf bifurcation and terminates at a saddle-node of periodic orbits (SNPO) bifurcation, where it merges with the attracting manifold $M_0^a$. 
The boundaries of the attracting manifold are the SNPO bifurcation and the saddle-node on invariant circle (SNIC) bifurcation (where $M_0^a$ is homoclinic to the critical manifold $S_0$). 
Here, we restrict our attention to the $x$-interval $U := [0.077,0.142]$ so that $M_0^a$ is normally hyperbolic attracting for all $x \in U$. The manifold $M_0^r$ is normally hyperbolic repelling for all $x \in (x_{HB},0.142]$ where $x_{HB} \approx 0.0973$ is the $x$-coordinate of the Hopf bifurcation.  

\begin{remark}
The methods developed in this article break down at the SNPO and SNIC bifurcations. 
At the SNPO bifurcation, there is loss of normal hyperbolicity and the construction of the normal bundle requires modification. 
As such, our methods cannot resolve the dynamics of solutions in their passage past the SNPO region. 
That is, we are unable to study any torus canard dynamics. 
At the SNIC bifurcation, the frequency $\omega(x)$ goes to zero and its slope $\omega^\prime(x)$ becomes infinitely steep. Consequently, our algorithms for the constructions of the phase-preserving vectors, $v_S$, and the corrections, $\Gamma_j$, are no longer valid. We leave the adaptation of our methods for these non-hyperbolic scenarios to future work. 
\end{remark}

The frequency-current curve is shown in Fig.~\ref{fig:mltlayer}(b). The blue branch corresponds to the frequency map of the attracting manifold $M_0^a$ and the red branch corresponds to the frequency map of the repelling manifold $M_0^r$. Provided $x$ is chosen away from the SNIC and SNPO bifurcations, the frequency map of $M_0^a$ is smooth and bounded. Our chosen $x$-interval $U$ is indicated by the vertical dashed lines. 

We now proceed to describe the methods we used to compute the monodromy matrix, the Floquet normal form for the transition matrix, and the slow, rotational, and normal vectors.  

\subsection{Monodromy matrix and Floquet normal form}
By the set-up established in Section~\ref{sec:mlt_numerics}, we have that the monodromy matrix $\Phi(x,T,0)$ is simply the (right) $\widetilde t$-endpoint data of our $\Phi$ solutions. 

To compute the Floquet normal form of $\Phi$, we first use the following relation for the monodromy: 
\[ \Phi(x,T,0) = e^{T B(x)}, \] 
where $B(x)$ is the constant matrix in Floquet's Theorem. 
Thus, for each fixed $x$, we calculated $B(x)$ as
\[ B(x) = \frac{1}{T} \log \Phi(x,T,0). \]
Here, the matrix logarithm was implemented using MATLAB's built-in (principal) matrix logarithm function, \texttt{logm}. Then, again for each fixed $x$, we computed the periodic matrix $P(x,t,0)$ in the Floquet normal form as
\[ P(x,t,0) = \Phi(x,t,0) e^{-B(x) t}. \]
By Remark~\ref{remark:PhiRow3}, the last row of $P(x,t,0)$ is simple with entries given by 
\[ P_{31}(x,t,0) \equiv 0, \quad P_{32}(x,t,0) \equiv 0, \quad {\rm and } \quad P_{33}(x,t,0) \equiv 1 \quad \text{ for all } \,\,x \in U. \]
%

\subsection{Rotational, slow, and normal vectors}
We now use the periodics, transition matrices, and Floquet normal form to construct the rotational, slow, and normal vectors. For each fixed $x$:   
\begin{itemize}
\setlength{\itemsep}{0pt}
\item We computed the rotational (i.e., velocity) vector as $v_R(x,t) = F_0(\gamma(x,t))$. 
\item We computed the phase-preserving slow vectors as 
\[ v_S(x,t) = \Phi(x,t,0) v_S(x,0) - t \frac{\omega^\prime(x)}{\omega(x)} v_R(x,t), \]
where $v_S(x,0) = D_x \gamma(x,0)$. To calculate the derivatives $D_x\gamma(x,0)$ and $\omega^\prime(x)$, we used interpolation to produce numerically differentiable interpolants. (The data for $\gamma(x,0)$ is obtained from the solution family in Section~\ref{sec:mlt_numerics} as the left $\widetilde t$-endpoint data and the data for $\omega(x)$ is obtained as $\frac{2\pi}{T}$, where $T$ is the total integration time.)  
\item We computed the normal vectors as 
\[ v_N(x,t) = P(x,t,0) v_N(x,0), \]
where $P$ is the periodic matrix from the Floquet normal form, and $v_N(x,0)$ was chosen to be the eigenvector of $B(x)$ corresponding to the non-zero eigenvalue. 
\end{itemize}
%
%

\subsection{Oblique projection maps}
Having constructed the rotational, slow, and normal vectors, we now compute the oblique projection maps. To do this, we iterated the following steps for each fixed coordinate pair $(\hat x,\hat t) \in U \times [0,T]$. 
\begin{enumerate}
\item Construct the matrix 
\[ W(\hat x,\hat t) = \begin{bmatrix} v_R(\hat x, \hat t) & v_S(\hat x, \hat t) & v_N(\hat x, \hat t) \end{bmatrix}, \] 
the columns of which form a basis for the ambient space $\mathbb R^3$. 
\item Compute the inverse matrix, $W^{-1}(\hat x,\hat t)$, the rows of which form a basis for the dual space. 
\item Compute the projection maps $\Pi_R(\hat x,\hat t), \Pi_S(\hat x,\hat t)$, and $\Pi_N(\hat x,\hat t)$ as the outer products 
\begin{equation*} 
  \begin{split}
    \Pi_R(\hat x,\hat t) &= v_R(\hat x, \hat t) w_R(\hat x, \hat t) \\
    \Pi_S(\hat x,\hat t) &= v_S(\hat x, \hat t) w_S(\hat x, \hat t) \\
    \Pi_N(\hat x,\hat t) &= v_N(\hat x, \hat t) w_N(\hat x, \hat t) 
  \end{split}
\end{equation*}
where $w_R(\hat x, \hat t), w_S(\hat x, \hat t)$, and $w_N(\hat x, \hat t)$ respectively denote the first, second, and third rows of $W^{-1}(\hat x,\hat t)$.
\end{enumerate}

\subsection{Homological equation \& solution procedure}
The homological equation for the first-order correction to the manifold is
\[ \mathcal{L}_0 [\Gamma_1(x,\phi)] + D\Gamma_0(x,\phi) r_1(x) = F_1 (\Gamma_0(x,\phi)) \]
where $\mathcal{L}_0[\cdot] = \omega(x) D_{\phi} [\cdot] - DF_0(\Gamma_0) [\cdot]$. The corresponding adjoint operator is $\mathcal{L}_0^*[\cdot] = -\omega(x) D_{\phi}[\cdot] - DF_0(\Gamma_0)^T [\cdot]$. The transition matrix of the adjoint variational equation, $\mathcal{L}_0^*[\psi] = 0$, is given by 
$\Psi(x,t,0) = \left( \Phi(x,t,0)^{-1} \right)^T$. 
By choosing the following representation for the Floquet normal form, 
\[ \Phi(x,t,0) = W(x,t) e^{C(x) t} W^{-1}(x,0), \] 
where $W = \begin{bmatrix} v_R & v_S & v_N \end{bmatrix}$ and $C$ is a constant (in time) matrix, 
the periodic matrix of the adjoint variational equation is
\[ \begin{bmatrix} \psi_1(x,t) & \psi_2(x,t) & \psi_3(x,t) \end{bmatrix} = \left( W(x,t)^{-1} \right)^T. \]
%
In this formulation, the first two columns $\left\{ \psi_1(x,t), \psi_2(x,t) \right\}$ form a periodic basis for the null space of the adjoint linear operator. 

\begin{remark}
The periodic and constant matrices in a Floquet decomposition of the transition matrix are not uniquely defined, which is what allows us to choose our representation of $\Phi$ in terms of the basis vectors in $W$. The Floquet multipliers, however, are unique. 
\end{remark}

We now construct a solution of the homological equation that achieves a maximally reduced flow on $M_{\eps}$ by following the 3-step iterative procedure described in Section~\ref{subsec:solnprocedure}. 

\subsubsection{Step 1: Averaged dynamics}
By the Fredholm alternative, the homological equation is solvable provided 
\[ \begin{bmatrix} \left( \frac{1}{2\pi} \int_0^{2\pi} D\Gamma_0(x,\phi)^T  \psi_1(x,\phi) \, d\phi \right)^T \\ \left( \frac{1}{2\pi} \int_0^{2\pi} D\Gamma_0(x,\phi)^T  \psi_2(x,\phi) \, d\phi \right)^T \end{bmatrix}  r_1(x) = \begin{bmatrix} \frac{1}{2\pi} \int_0^{2\pi}  F_1(\Gamma_0(x,\phi))^T  \psi_1(x,\phi) \, d\phi \\ \frac{1}{2\pi} \int_0^{2\pi}  F_1(\Gamma_0(x,\phi))^T  \psi_2(x,\phi) \, d\phi \end{bmatrix}. \]
Here, we computed the Jacobian $D\Gamma_0(x,\phi)$ using the relation
\[ D\Gamma_0(x,\phi) = \begin{bmatrix} D_x \Gamma_0(x,\phi) & D_{\phi} \Gamma_0(x,\phi) \end{bmatrix} = \begin{bmatrix} v_S\left( x,\tfrac{\phi}{\omega(x)} \right) & \tfrac{1}{\omega(x)} v_R\left( x,\tfrac{\phi}{\omega(x)} \right) \end{bmatrix}  \]
and we evaluated the integrals using MATLAB's in-built trapezoidal integration routine. The components of the averaged vector field are shown in Fig.~\ref{fig:mlt_vectorfield}. 

\begin{figure}[ht!]
    \centering
    \includegraphics[width=5in]{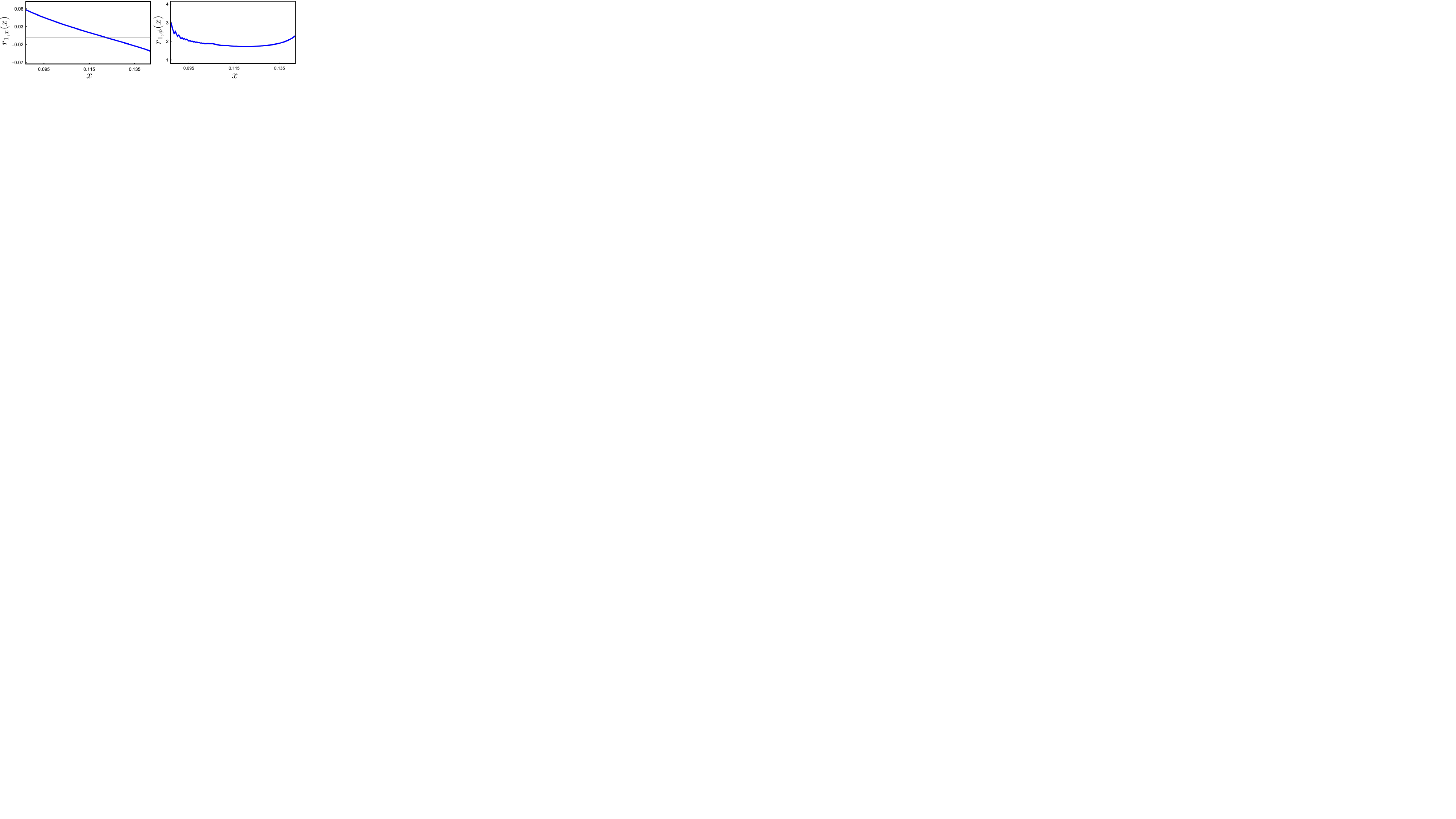}
    \put(-362,91){(a)}
    \put(-176,91){(b)}
    \caption{Averaged vector field $r_1(x)$ for $k=-0.12$. (a) The $x$-component and (b) $\phi$-component of $r_1$.}
    \label{fig:mlt_vectorfield}
\end{figure}

The relationship between our maximally reduced vector field computed using the Fredholm alternative and the classical averaging method is illustrated in Fig.~\ref{fig:mlt_averaged}. From the averaged vector field in Fig.~\ref{fig:mlt_averaged}(a), we find that there is a stable averaged equilibrium. 
Hence, the slow drift along a section $\{ \phi = {\rm constant} \}$ of the manifold will converge to the $x$-value corresponding to the averaged equilibrium, as shown in panels (b) and (c). 

\begin{figure}[ht!]
   \centering
   \includegraphics[width=3.5in]{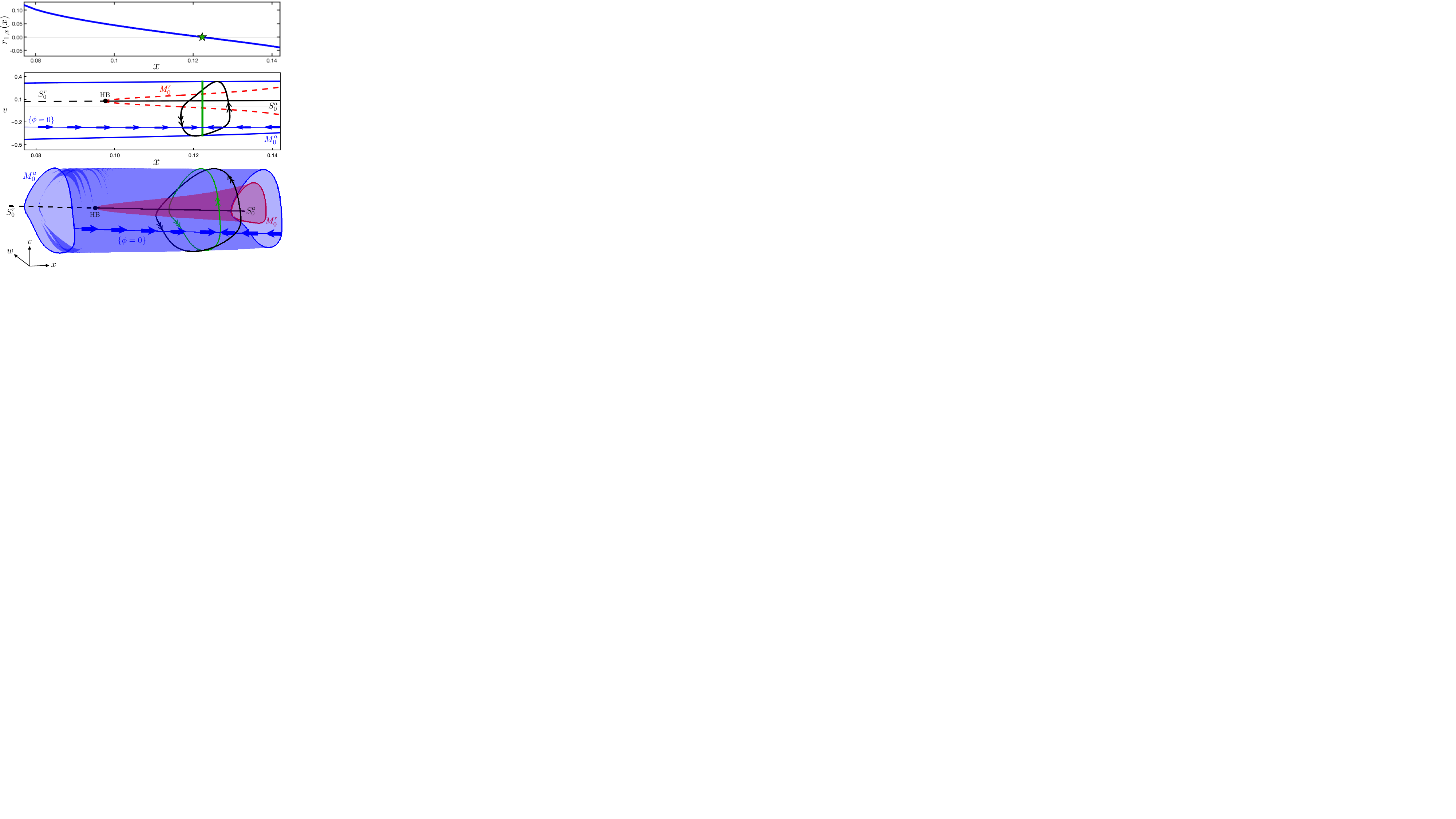}
   \put(-256,232){(a)}
   \put(-256,168){(b)}
   \put(-256,85){(c)}
   \caption{Slow averaged dynamics of the Morris-Lecar-Terman model for $k=-0.12$ on the restricted $x$-interval $U = [0.077,0.142]$, where $\left. M_0^a \right|_U$ is strictly normally hyperbolic attracting. (a) The slow vector field $r_{1,x}(x)$ possesses a stable equilibrium (green marker). (b) Projection of the dynamics into the $(x,v)$ phase plane. The critical manifold is unstable (dashed, black curve $S_0^r$) to the left of the subcritical Hopf bifurcation and is stable (solid, black curve $S_0^a$) to the right of it. The maximum and minimum $v$-coordinates of the limit cycles in the attracting and repelling manifolds of periodics, $M_0^a$ and $M_0^r$, are shown in blue and red, respectively. We also show the $\{ \phi=0 \}$-section of $M_0^a$ (thin blue curve) together with the (scaled) slow vector field (thick blue arrows) along it. The stable fixed point of $r_{1,x}(x)$ corresponds to a specific limit cycle in $M_0^a$, which we highlight in green. For comparison, we include the full system spiking attractor for $\eps = 0.01$ (black orbit), which is an $\mathcal{O}(\eps)$ perturbation of the green averaged attractor. (c) Dynamics in the full $(v,w,x)$ phase space.}
   \label{fig:mlt_averaged}
\end{figure}

\subsubsection{Step 2: Normal correction}
By projecting the homological equation onto the normal bundle and solving the resulting first-order, linear, inhomogeneous ODE by variation of parameters, we obtain the following formula for the normal component of the first-order correction to the manifold
\begin{equation} \label{eq:mlt_gamma1N}
\Gamma_1^N(x,\phi) = \Phi_N(x,\phi,0) \Gamma_1^N(x,0) + \frac{1}{\omega(x)} \int_{0}^{\phi} \Phi_N(x,\phi,\sigma) \Pi_N(x,\sigma) F_1(\Gamma_0(x,\sigma))\,d\sigma,
\end{equation}
where $\Phi_N(x,\phi,\sigma) = \Pi_N(x,\tfrac{\phi}{\omega(x)}) \Phi(x,\tfrac{\phi}{\omega(x)},\tfrac{\sigma}{\omega(x)}) \Pi_N(x,\sigma)$ is the projection of the transition matrix onto the normal bundle, and the initial condition is determined by enforcing $2\pi$-periodicity in $\phi$. That is, 
\begin{equation} \label{eq:mlt_gamma1N0}
\Gamma_1^N(x,0) = \frac{1}{\omega(x)} \left( \mathbb I_3 - M_N(x) \right)^{-1} \left( \int_0^{2\pi} \Phi_N(x,2\pi,\sigma) \Pi_N(x,\sigma) F_1(\Gamma_0(x,\sigma))\, d\sigma \right),
\end{equation}
where $M_N(x) = \Pi_N(x,\tfrac{2\pi}{\omega(x)}) \Phi(x,\tfrac{2\pi}{\omega(x)},0) \Pi_N(x,0)$ is the projection of the monodromy onto the normal bundle. 

To implement the variation of parameters formula computationally, we first note that our numerically computed transition matrix from the system \eqref{eq:mlt_bvp} subject to \eqref{eq:mlt_bcs} corresponds to the matrix $\Phi(x,t,0)$. Using this, we computed the quantities in \eqref{eq:mlt_gamma1N} and \eqref{eq:mlt_gamma1N0} as follows. 
\begin{itemize}
\item For the projection, $\Phi_N(x,\phi,\sigma)$, of the transition matrix onto the normal bundle, we used the semigroup property $\Pi_N(x,\phi) = \Phi(x,\phi,\sigma) \Pi_N(x,\sigma) \Phi(x,\phi,\sigma)^{-1}$ together with the idempotency $\Pi_N^2 = \Pi_N$ to write
\[ \Phi_N(x,\phi,\sigma) 
= \Phi(x,\phi,\sigma) \,\Pi_N(x,\sigma) 
= \Phi(x,\phi,0) \, \Phi(x,\sigma,0)^{-1} \,\Pi_N(x,\sigma), 
\]
so that the integrands in \eqref{eq:mlt_gamma1N} and \eqref{eq:mlt_gamma1N0} can be expressed in terms of our already computed transition matrix. 
\item We calculated the cumulative integral in \eqref{eq:mlt_gamma1N} using the formulation of $\Phi_N(x,\phi,\sigma)$ above and MATLAB's in-built cumulative trapezoidal integration routine. 
\item The last $\phi$-data point in the previous step corresponds to the integral in \eqref{eq:mlt_gamma1N0}.
\end{itemize}
The output from evaluating \eqref{eq:mlt_gamma1N} is shown in Fig.~\ref{fig:mlt_gamma1N}.

\begin{figure}[ht!]
    \centering
    \includegraphics[width=5in]{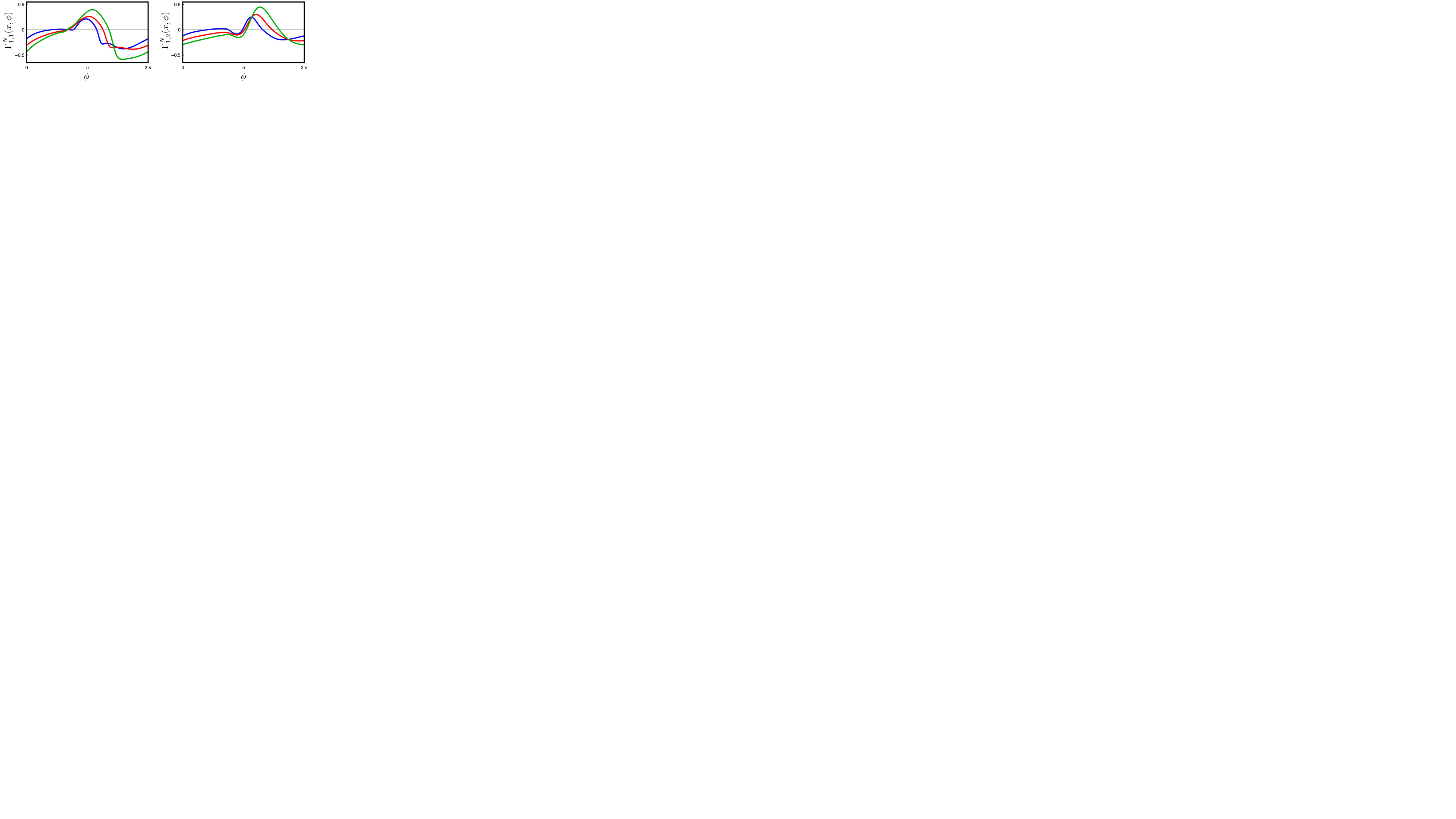}
    \put(-360,88){(a)}
    \put(-178,88){(b)}
    \caption{Components of the normal correction $\Gamma_1^N(x,\phi)$ for $x=0.09$ (blue curves), $x=0.11$ (red curves), and $x=0.13$ (green curves). (a) First component of $\Gamma_1^N(x,\phi)$ and (b) second component of $\Gamma_1^N(x,\phi)$. Due to the standard slow/fast structure of the system, i.e., since the third row of $\Phi(x,\phi,0)$ is the canonical vector $\begin{bmatrix} 0 & 0 & 1 \end{bmatrix}^T$, the third component of $\Gamma_1^N(x,\phi)$ is trivial.}
    \label{fig:mlt_gamma1N}
\end{figure}

\subsubsection{Step 3: Tangential correction}
We now return to the homological equation
\[ \mathcal{L}_0[\Gamma_1] = F_1(\Gamma_0(x,\phi)) - D\Gamma_0(x,\phi) r_1(x).  \]
The general solution is given by
\[ \Gamma_1(x,\phi) = \Phi(x,\phi,0) \Gamma_1(x,0) + J(x,\phi), \]
where 
\[ J(x,\phi) := \frac{1}{\omega(x)} \int_0^{\phi} \Phi(x,\phi,\sigma) \left( F_1(\Gamma_0(x,\sigma))-D\Gamma_0(x,\sigma) r_1(x) \right) \, d\sigma, \]
and the initial conditions $\Gamma_1(x,0)$ are to be determined. 

Recall that the transition matrix has the special structure
\[ \Phi(x,\phi,\sigma) = \begin{bmatrix} 
\Phi_{11}(x,\phi,\sigma) & \Phi_{12}(x,\phi,\sigma) & \Phi_{13}(x,\phi,\sigma) \\
\Phi_{21}(x,\phi,\sigma) & \Phi_{22}(x,\phi,\sigma) & \Phi_{23}(x,\phi,\sigma) \\ 
0 & 0 & 1 \end{bmatrix}, \]
because the Morris-Lecar-Terman model is a slow/fast system in standard form. 
Enforcing $2\pi$-periodicity of our solution, we obtain
\begin{equation} \label{eq:Gamma1periodicity}
    \begin{bmatrix} 
    1-M_{11}(x) & -M_{12}(x) \\ 
    \kappa(1-M_{11}(x)) & -\kappa M_{12}(x) 
    \end{bmatrix} 
    \begin{bmatrix} \Gamma_{1,1}(x,0) \\ \Gamma_{1,2}(x,0) \end{bmatrix} = \begin{bmatrix} J_1(x,2\pi) + \Gamma_{1,3}(x,0) M_{13}(x) \\ J_2(x,2\pi) + \Gamma_{1,3}(x,0) M_{23}(x) \end{bmatrix},
\end{equation} 
where $M(x) = \Phi(x,\tfrac{2\pi}{\omega(x)},0)$ is the monodromy matrix, $M_{ij}(x)$ denotes its $(i,j)$-element, and $\Gamma_{1,i}(x,0)$ and $J_i(x,2\pi)$ denote the $i^{\rm th}$-components of $\Gamma_1(x,0)$ and $J(x,2\pi)$. The coefficient matrix in \eqref{eq:Gamma1periodicity} is singular and the constant 
\[ \kappa = -\frac{M_{21}(x)}{1-M_{11}(x)} \]
quantifies the linear dependence between the rows. 
To guarantee that the periodicity equation \eqref{eq:Gamma1periodicity} has a solution, we require that 
\[ J_2(x,2\pi) + \Gamma_{1,3}(x,0) M_{23}(x) = \kappa \left( J_1(x,2\pi) + \Gamma_{1,3}(x,0) M_{13}(x) \right). \]
This relation uniquely determines the value for $\Gamma_{1,3}(x,0)$. 
To complete the system and specify unique values for $\Gamma_{1,1}(x,0)$ and $\Gamma_{1,2}(x,0)$, we append the additional constraint that $\Gamma_1(x,\phi)$ must be orthogonal to the basis function $\psi_1(x,\phi)$ of the adjoint null space. 

Thus, to obtain a unique solution for the unknown initial data $\Gamma_1(x,0)$, we solve the linear system
\[ \begin{bmatrix} 
1-M_{11}(x) & -M_{12}(x) & -M_{13}(x) \\ 
0 & 0 & M_{23}(x)-\kappa M_{13}(x) \\ 
\langle \Phi_1,\psi_1 \rangle & \langle \Phi_2,\psi_1 \rangle & \langle \Phi_3,\psi_1 \rangle\end{bmatrix} \begin{bmatrix} \Gamma_{1,1}(x,0) \\ \Gamma_{1,2}(x,0) \\ \Gamma_{1,3}(x,0) \end{bmatrix} = \begin{bmatrix} J_1(x,2\pi) \\ \kappa J_1(x,2\pi)-J_2(x,2\pi) \\ -\langle J(x,\phi), \psi_1(x,\phi) \rangle \end{bmatrix}, \]
where $\Phi_i$ denotes the $i^{\rm th}$ column of $\Phi(x,\phi,0)$.
The first equation comes from the periodicity condition \eqref{eq:Gamma1periodicity}, the second equation comes from the requirement that the system \eqref{eq:Gamma1periodicity} be consistent, and the third equation comes from enforcing orthogonality of $\Gamma_1(x,\phi)$ and $\psi_1(x,\phi)$. Representative solutions using this construction are shown in Fig.~\ref{fig:mlt_gamma1}. 

\begin{figure}[ht!]
    \centering
    \includegraphics[width=5in]{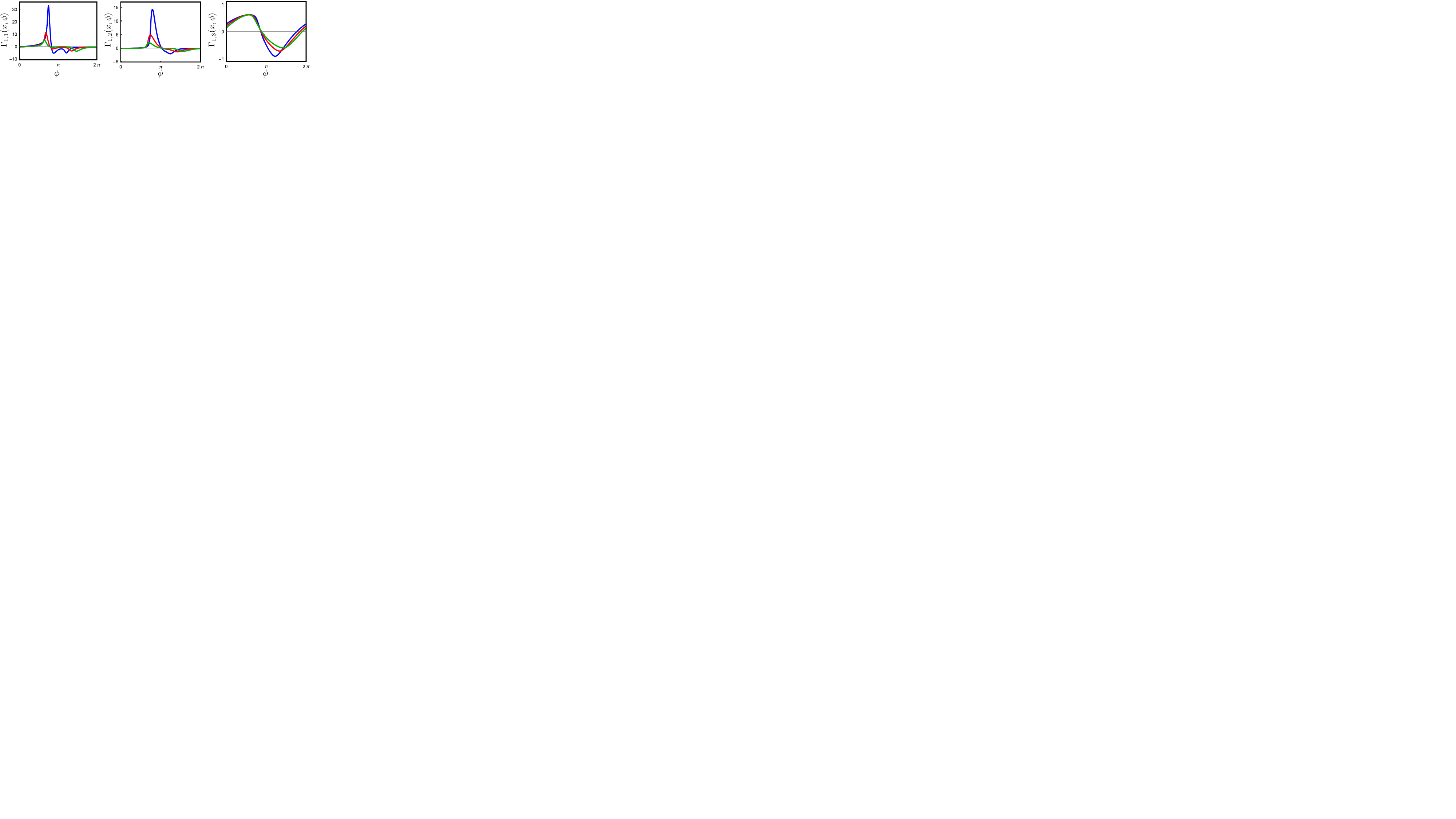}
    \put(-362,85){(a)}
    \put(-244,85){(b)}
    \put(-122,85){(c)}
    \caption{Periodic manifold corrections $\Gamma_1(x,\phi)$. The (a) first, (b) second, and (c) third components of $\Gamma_1(x,\phi)$ are shown for $x=0.09$ (blue curves), $x=0.11$ (red curves), and $x=0.13$ (green curves).}
    \label{fig:mlt_gamma1}
\end{figure}

\begin{remark}
Strictly speaking, we wish for our solution $\Gamma_1(x,\phi)$ to be orthogonal to the full set of basis functions, $\psi_i(x,\phi)$ for $i=1,2$, of the null space of the adjoint operator. 
Even though we only explicitly enforced orthogonality of $\Gamma_1(x,\phi)$ and $\psi_1(x,\phi)$, our resulting solution automatically satisfies the other orthogonality relation $\langle \Gamma_1(x,\phi), \psi_2(x,\phi) \rangle = 0$.
\end{remark}

With the manifold correction $\Gamma_1(x,\phi)$ now determined, we can extract the tangential component by projection
\[ \Gamma_1^M(x,\phi) = \Pi_M(x,\phi) \Gamma_1(x,\phi) = (\mathbb I_3 - \Pi_N(x,\phi)) \Gamma_1(x,\phi) = \Gamma_1(x,\phi)-\Gamma_1^N(x,\phi). \]
Finally, the updated manifold of periodics is given by 
\[ M_{\eps} = \Gamma_0(x,\phi) + \eps \Gamma_1(x,\phi). \] 
We show $M_{\eps}$ for a representative value of $\eps$ in Fig.~\ref{fig:mlt_meps}. 

\begin{figure}[ht!]
    \centering
    \includegraphics[width=3in]{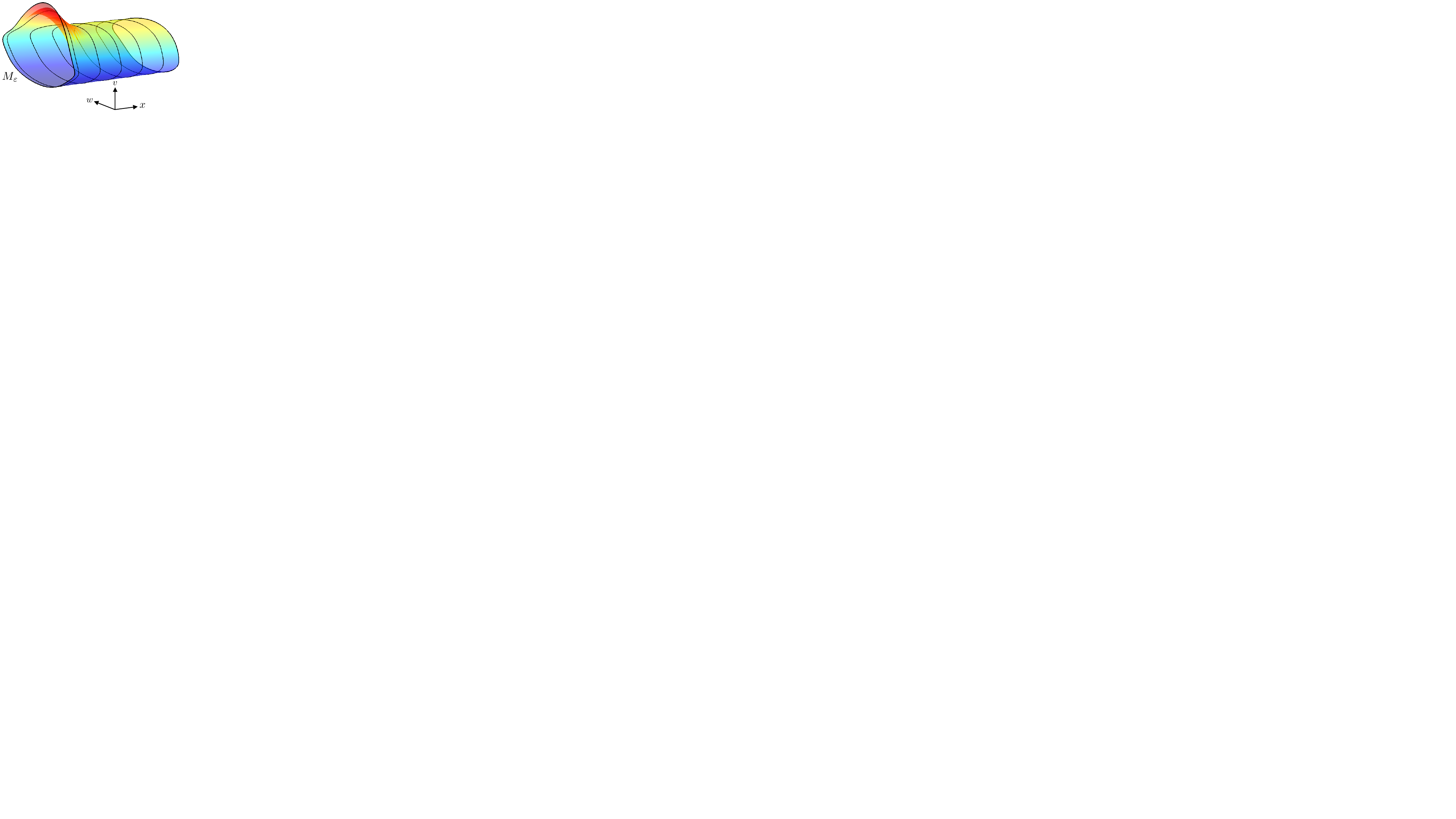}
    \caption{Manifold of (stable) periodics $M_{\eps}$ with the first two terms of the embedding for $\eps = 0.01$. The (black) curves of constant $x$ have been added for visualization purposes.}
    \label{fig:mlt_meps}
\end{figure}

Higher-order corrections, $\Gamma_j(x,\phi)$ for $j\geq 2$, can be found in a similar fashion. More specifically, the next correction $\Gamma_2(x,\phi)$ is governed by the homological equation
\[ \mathcal{L}_0[\Gamma_2] + D\Gamma_0 r_2 = F_2(\Gamma_0)+DF_1(\Gamma_0) \Gamma_1 - D\Gamma_1 r_1, \]
where the inhomogeneity consists of terms that have already been calculated (or can easily be obtained from the known data). 
Thus, a periodic solution which is orthogonal to the basis of the adjoint null space can be constructed by repeating the iterative solution procedure of Section~\ref{subsec:solnprocedure}.

\section*{Acknowledgements}
MW would like to acknowledge support from the Australian Research Council (ARC) through the Discovery Project grant scheme. BR is happy to acknowledge hospitality and financial support of the Sydney Mathematical Research Institute (SMRI) through their international visitor program.

\printbibliography

@Misc{auto07p,
  title = {AUTO-07P: Continuation and bifurcation software for ordinary differential equations},
  author =	 {E.J. Doedel and A.R. Champneys and T.F. Fairgrieve and Y.A. Kuznetsov and K.E. Oldeman and
R.C. Paffenroth and B. Sandstede and X.J. Wang and C. Zhang},
  howpublished = {Technical Report, Concordia University, Montreal, Canada},
  year =	 {2007},
}

@article{Terman1991,
author = {D. Terman},
title = {{Chaotic spikes arising from a model of bursting in excitable membranes}},
journal = {SIAM J. Appl. Math.},
volume = {51},
year = {1991}, 
pages = {1418--1450}
}

@article{lizarraga2021,
    author  = {Lizarraga, Ian and Rink, Bob and Wechselberger, Martin},
    title   = {Multiple timescales and the parametrisation method in geometric singular perturbation theory},
    journal = {Nonlinearity},
    volume  = {34},
    number  = {6},
    pages   = {4163},
    year    = {2021},
    doi     = {10.1088/1361-6544/ac04bf}
}

@article{fenichel1979,
    author = {Fenichel, Neil},
    title = {Geometric singular perturbation theory for ordinary differential equations},
    journal = {Journal of Differential Equations},
    volume = {31},
    number = {1},
    pages = {53--98},
    year = {1979}
}

@incollection{jones1995,
    author = {Jones, Christopher K. R. T.},
    title = {Geometric singular perturbation theory},
    booktitle = {Dynamical Systems},
    series = {Lecture Notes in Mathematics},
    editor = {Johnson, Russell},
    volume = {1609},
    pages = {44--118},
    publisher = {Springer},
    year = {1995}
}

@book{sanders2007,
    author = {Sanders, Jan A. and Verhulst, Ferdinand and Murdock, James},
    title = {Averaging Methods in Nonlinear Dynamical Systems},
    series = {Applied Mathematical Sciences},
    volume = {59},
    edition = {2nd},
    publisher = {Springer},
    year = {2007}
}

@book{haro2016,
    author = {Haro, {\`A}lex and Canadell, Marta and Figueras, Josep-Llu{\'\i}s and Lu, Tong and de la Llave, Rafael},
    title = {The Parameterization Method for Invariant Manifolds},
    series = {Applied Mathematical Sciences},
    volume = {195},
    publisher = {Springer},
    year = {2016}
}

@book{wechselberger2020,
  title     = {Geometric Singular Perturbation Theory Beyond the Standard Form},
  author    = {Wechselberger, Martin},
  year      = {2020},
  publisher = {Springer},
  series    = {Lecture Notes in Mathematics},
  volume    = {2262},
  doi       = {10.1007/978-3-030-53412-2}
}

@book{roberts2015,
  title     = {Model Emergent Dynamics in Complex Systems},
  author    = {Roberts, Anthony J.},
  year      = {2015},
  publisher = {SIAM},
  doi       = {10.1137/1.9781611973873}
}

@inproceedings{lam1989,
  author    = {Lam, S. H. and Goussis, D. A.},
  title     = {Understanding complex chemical kinetics with computational singular perturbation},
  booktitle = {Symposium (International) on Combustion},
  volume    = {22},
  number    = {1},
  pages     = {931--941},
  year      = {1989},
  publisher = {Elsevier}
}

@article{roberts2015averaging,
  author    = {Roberts, Kerry-Lyn and Rubin, Jonathan E. and Wechselberger, Martin},
  title     = {Averaging, Folded Singularities, and Torus Canards: Explaining Transitions between Bursting and Spiking in a Coupled Neuron Model},
  journal   = {SIAM Journal on Applied Dynamical Systems},
  volume    = {14},
  number    = {2},
  pages     = {997--1040},
  year      = {2015},
  doi       = {10.1137/140981770}
}

@article{vo2017,
  author    = {Vo, Theodore},
  title     = {Generic torus canards},
  journal   = {Physica D: Nonlinear Phenomena},
  volume    = {353},
  pages     = {1--20},
  year      = {2017},
  doi       = {10.1016/j.physd.2017.04.006}
}

@article{Izhikevich2000,
  author    = {E.M. Izhikevich},
  title     = {Neural excitability, spiking and bursting},
  journal   = {Interntational Journal of Bifurcation and Chaos},
  volume    = {10},
  pages     = {1171--1266},
  year      = {2000},
  doi       = {10.1142/S0218127400000840}
}

@incollection{Rubin2002,
    author = {J.E. Rubin and D. Terman},
    title = {{Geometric Singular Perturbation Analysis of Neuronal Dynamics}},
    booktitle = {Handbook of Dynamical Systems},
    editor = {Bernold Fiedler},
    volume = {2},
    pages = {93--146},
    publisher = {Elsevier Science},
    year = {2002}
}

@article{PontryaginRodygin1960,
  author    = {Pontryagin, L. S. and Rodygin, L. V.},
  title     = {{Approximate solution of a system of ordinary differential equations with a small parameter in the derivative terms}},
  journal   = {Soviet Mathematics - Doklady},
  volume    = {1},
  year      = {1960},
  pages     = {237--240},
  note      = {Translated from Doklady Akademii Nauk SSSR, 131:2 (1960), 255--258}
}

@book{Carr1981,
  author    = {Carr, Jack},
  title     = {Applications of Centre Manifold Theory},
  series    = {Applied Mathematical Sciences},
  volume    = {35},
  year      = {1981},
  publisher = {Springer-Verlag},
  address   = {New York-Berlin},
  isbn      = {978-0-387-90577-2}
}

@book{Bogoliubov1961,
  author    = {Bogoliubov, N. N. and Mitropolsky, Y. A.},
  title     = {Asymptotic Methods in the Theory of Non-Linear Oscillations},
  publisher = {Gordon and Breach Science Publishers},
  year      = {1961},
  address   = {New York}
}

@book{HPS1977,
  author    = {Hirsch, M. W. and Pugh, C. C. and Shub, M.},
  title     = {Invariant Manifolds},
  series    = {Lecture Notes in Mathematics},
  volume    = {583},
  year      = {1977},
  publisher = {Springer-Verlag},
  address   = {Berlin-New York}
}

@incollection{Rinzel1987,
  author    = {Rinzel, John},
  title     = {A formal classification of bursting mechanisms in excitable membrane models},
  booktitle = {Mathematical topics in population biology, morphogenesis and neurosciences},
  editor    = {Teramoto, E. and Yamaguti, M.},
  series    = {Lecture Notes in Biomathematics},
  volume    = {71},
  pages     = {267--281},
  publisher = {Springer-Verlag},
  year      = {1987},
  address   = {Berlin, Heidelberg}
}
\end{document}